\newtheorem{theorem}{Theorem}[section]
\newtheorem{lemma}[theorem]{Lemma}
\newtheorem{corollary}[theorem]{Corollary}
\newtheorem{proposition}[theorem]{Proposition}
\newtheorem{problem}[theorem]{Problem}
\theoremstyle{remark}
\newtheorem{example}[theorem]{Example}
\newtheorem{remark}[theorem]{Remark}
\newtheorem*{ack}{Acknowledgements}
\numberwithin{equation}{section}
\def\vv{\mathbf{v}}
\def\ww{\mathbf{w}}
\def\xx{\mathbf{x}}
\def\es{\varnothing}
\def\st{^\ast}
\def\ol#1{\overline{#1}}
\def\wh#1{\widehat{#1}}
\def\Fr{Fra\"{\i}ss\'e\ }
\def\C{\mathscr{C}}
\renewcommand\leq{\leqslant}
\renewcommand\geq{\geqslant}
\DeclareMathOperator\End{End} \DeclareMathOperator\Aut{Aut} \DeclareMathOperator\Sym{Sym} 
\DeclareMathOperator\Self{Self}   
\DeclareMathOperator\Con{Con} \DeclareMathOperator\Fl{Flim}  
\title{A universality result for endomorphism monoids of\\ some ultrahomogeneous structures}
\author{Igor Dolinka \lowercase{and} Dragan Ma\v sulovi\'c\\[1mm]
{\footnotesize\it Department of Mathematics and Informatics, University of Novi Sad,}\\[-1mm]
{\footnotesize\it Trg Dositeja Obradovi\'ca 4, 21000 Novi Sad, Serbia}\\[-1mm]
{\footnotesize{\it e-mail: }dockie@dmi.uns.ac.rs; masul@dmi.uns.ac.rs}}
\date{}
\begin{document}

\maketitle

\begin{abstract}
We devise a fairly general sufficient condition ensuring that the endomorphism monoid of a countably infinite
ultrahomogeneous structure (i.e.\ a \Fr limit) embeds all countable semigroups. This approach provides us not only with
a framework unifying the previous scattered results in this vein, but actually yields new applications for endomorphism
monoids of the (rational) Urysohn space and the countable universal ultrahomogeneous semilattice.

\medskip

\noindent{\it Keywords:} endomorphism monoid; \Fr limit; pushout

\medskip

\noindent 2010 {\it Mathematics subject classification:} Primary 20M20; Secondary 03C15, 08A35, 18A30, 20M50
\end{abstract}

\section*{Introduction}

The principal source of motivation for this paper stems from fascinating objects of model theory called \emph{\Fr
limits}. Namely, by a well-known result of R.\ \Fr \cite{F1,F2} (see also \cite{H}), if $\C$ is a countable set of
finitely generated first-order structures of a fixed countable signature which is (up to isomorphism) closed under
taking finitely generated substructures, enjoys the \emph{amalgamation property} (AP) and the \emph{joint embedding
property} (JEP), then there is a unique countable structure $F$ with the following two properties:
\begin{itemize}
\item the set of all finitely generated substructures of $F$ coincides (up to isomorphism) with $\C$,

\item $F$ is \emph{ultrahomogeneous}, meaning that any isomorphism between its finitely generated substructures extends
to an automorphism of $F$.
\end{itemize}
Such $F$ is called the \emph{$\C$-universal ultrahomogeneous structure} or, following \cite{H}, the \emph{\Fr limit} of
$\C$. The corresponding class $\C$ with the above properties is called a \emph{\Fr class}. Conversely, it is known that
every countably infinite ultrahomogeneous structure arises in this way: it is simply the limit of the class of
structures isomorphic to its finitely generated substructures. Of course, if all of this happens within a locally
finite class of structures (e.g.\ relational structures with finitely many constants), then it suffices to replace
`finitely generated' by `finite' in the above definitions.

Historically, the two oldest examples of \Fr limits are the \emph{rational Urysohn space} $\mathbb{U}_\mathbb{Q}$, the
limit of the class of all finite metric spaces with rational distances, and the linear order of the rationals,
$\mathbb{Q}$, which is the limit of the class of all finite linear orders. Other well-known classes admitting \Fr
limits include finite simple graphs, finite posets, and finite semilattices, resulting in the random graph $R$
\cite{C1,C2}, the random (generic) poset $\mathbb{P}$ \cite{DK,Sch}, and the countable universal homogeneous
semilattice $\Omega$ \cite{DKT}, respectively.

More recently, searching for a broader perspective on the notion of ultrahomogeneity, P.\ J.\ Cameron and J.\ Ne\v
set\v ril \cite{CN} introduced the concept of \emph{homomorphism-homogeneity}. A structure $A$ is
homomorphism-homogeneous if any homomorphism $B\to A$ defined on a finitely generated substructure $B$ of $A$ can be
extended to an endomorphism of $A$. In particular, any endomorphism of $B$ extends to an endomorphism of $A$. Now, a
somewhat more sophisticated question arises whether one can select these extensions in a `regular' fashion, in the
sense that it is possible to choose, for each $\varphi\in\End(B)$, an extension $\wh\varphi\in\End(A)$ such that the
mapping $\varphi\mapsto\wh\varphi$ is a monoid embedding $\End(B)\to\End(A)$. The same question applies to an arbitrary
substructure $B$ of $A$.

Here we consider the case when $A$ is a countably infinite ultrahomogeneous structure, that is, a \Fr limit. Since $A$
is universal for a certain class of countable structures, it is to be expected that $\End(A)$, the endomorphism monoid
of $A$, will exhibit a very rich structure and to some extent `inherit' the universal properties of $A$. In fact, if
for some (countably) infinite substructure $B$ of $A$ there is an embedding $\End(B)\to\End(A)$ and $B$ admits any
self-map of the set $B$ as an endomorphism, then $\Self(\aleph_0)$, the monoid of all self-maps of a countably infinite
set, embeds into $\End(A)$. Since any countable semigroup embeds into $\Self(\aleph_0)$ (by the semigroup analogue of
the Cayley theorem) it would then follow that the monoid $\End(A)$ is \emph{countably universal}, i.e.\ that it
contains a copy of each countable semigroup.

For example, it was shown in \cite{BDD} that this is true in the case of the random graph $R$, while the same
conclusion is reached in \cite{D} for the generic poset $\mathbb{P}$. Furthermore, Uspenskij \cite{Us} proved that
every topological group with a countable base is isomorphic to a topological subgroup of the isometry (i.e.\
automorphism) group of the \emph{Urysohn space} $\mathbb{U}$ \cite{Ka,Ur}, which implies that every countable group
embeds into $\Aut(\mathbb{U}_\mathbb{Q})$. Also, for any countably infinite ultrahomogeneous simple graph $G$ (a
complete list of these is given by Lachlan and Woodrow \cite{LW}) it is true that the symmetric group $\Sym(\aleph_0)$,
and thus any countable group, embeds into $\Aut(G)$, see \cite{He}. This is, however, no longer true if we move into
the realm of monoids and semigroups: it follows from \cite{Mu} that $R$ is actually the \emph{only} countable
ultrahomogeneous graph whose endomorphism monoid embeds all countable semigroups. Moreover, an easy exercise shows that
no nontrivial finite group embeds into $\End(\mathbb{Q})$, the monoid of all order-preserving mappings of $\mathbb{Q}$.

In this paper we develop a system of conditions on a \Fr class $\C$ under which its limit $F=\Fl(\C)$ has the property
that $\End(F)$ embeds the endomorphism monoid of any of its substructures, with universality consequences as sketched
above. After introducing the required preliminary notions and constructions, in Section 2 we state our general method.
The concrete applications are presented in Section 3. First, we review the results of \cite{BDD,D} in the light of the
presented general approach. We also derive some new consequences by proving that both $\End(\mathbb{U}_\mathbb{Q})$ and
$\End(\Omega)$ contain copies of $\Self(\aleph_0)$. Some open problems are discussed as well.

\section{Preliminaries: pushouts, amalgams, amalgamated sums}

For a \Fr class $\C$, let $\ol\C$ denote the class of all countable structures all of whose finitely generated
substructures belong to $\C$. We shall be concerned with \Fr classes $\C$ for which $\ol\C$, considered as a category
of first-order structures, admits certain constructions and properties (see \cite{Mac} for the basics of category
theory). The most fundamental such construction will be that of a \emph{pushout}. Let $X,Y,Z$ be objects of a category
$\mathbf{C}$, along with morphisms $f:X\to Y$ and $g:X\to Z$; such a configuration
$$
\xymatrix{Y & X \ar[l]_{f} \ar[r]^{g} & Z}
$$
is referred to as a \emph{span}. The pushout of this span consists of an object $P$ and two morphisms $i_1:Y\to P$ and
$i_2:Z\to P$ such that:
\begin{itemize}
\item[(a)] the diagram
$$
\xymatrix{Y \ar[r]^{i_1} & P\\
X \ar[u]^f \ar[r]_g & Z \ar[u]_{i_2}}
$$
commutes,
\item[(b)] for any object $Q$ and morphisms $j_1:Y\to Q$ and $j_2:Z\to Q$ for which the part of following diagram
involving $X,Y,Z,Q$ is commutative, there exists a unique morphism $u:P\to Q$ making the whole diagram
$$
\xymatrix{&& Q\\
Y \ar[r]^{i_1} \ar@/^2ex/[urr]^{j_1} & P \ar[ru]^u & \\
X \ar[u]^f \ar[r]_g & Z \ar[u]_{i_2} \ar@/_2ex/[ruu]_{j_2} & }
$$
commutative.
\end{itemize}
A standard argument shows that the pushout is unique up to isomorphism. The diagram from item (a) above will be called
a \emph{pushout square} (in $\mathbf{C}$). Throughout the paper the composition of morphisms will follow the
right-to-left convention, so that if $\alpha:A\to B$ and $\beta:B\to C$, then their composition is $\beta\alpha:A\to
C$. Thus the condition (b) asserts the existence of a unique morphism $u$ such that $ui_1=j_1$ and $ui_2=j_2$, where
$j_1f=j_2g$.

In the remaining part of the paper we are going to work with \emph{concrete} categories, whose objects are first-order
structures (of a particular similarity type) and morphisms are homomorphisms of such structures. It is crucial to
stress at this point that in the definition of a pushout, the homomorphism $u$ need not to be an embedding (or
injective) even if all the other morphisms occurring in the previous diagram (that is, $f,g,i_1,i_2,j_1,j_2$) are
embeddings.

Recall that an \emph{amalgam} (in a concrete category) is a span $(A,B,C,f,g)$ such that $f:B\to A$ and $g:B\to C$ are
embeddings. If $A,B,C\in\C$ for some class $\C$, then we have an amalgam \emph{in} $\C$. The \emph{amalgamation
property} for $\C$, mentioned earlier, asserts that any amalgam in $\C$ can be embedded into a structure $D\in\C$,
i.e.\ that there are embeddings $j_1:A\to D$ and $j_2:C\to D$ such that $j_1f=j_2g$. If $\C$ is a class of finitely
generated structures with the AP (for example, a \Fr class), then it is a part of folklore in model theory that the
statement of the AP extends to non-finitely generated members of $\ol\C$ in the following sense.

\begin{lemma}\label{AP}
Let $\C$ be a class of finitely generated structures enjoying the amalgamation property, and let $(A,B,C,f,g)$ be an
amalgam such that $B\in\C$ and $A,C\in\ol\C$. Then $(A,B,C,f,g)$ can be embedded into some structure $D\in\ol\C$.
\end{lemma}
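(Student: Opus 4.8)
The plan is to reduce the infinite amalgamation to countably many applications of the (finite) amalgamation property, glued together along a chain. First I would write $A$ and $C$ as increasing unions of finitely generated substructures adapted to $B$: enumerating the universe of $A$ as $a_1,a_2,\dots$, set $A_0=f(B)$ (a substructure of $A$ isomorphic to $B$) and $A_n=\langle A_0\cup\{a_1,\dots,a_n\}\rangle$. Since $A\in\ol\C$, every $A_n$ is a finitely generated substructure of $A$ and hence lies in $\C$, each inclusion $A_n\hookrightarrow A_{n+1}$ is an embedding, and $A=\bigcup_n A_n$. Do the same on the other side, with $C_0=g(B)$ and $C=\bigcup_n C_n$. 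After corestriction, $f$ and $g$ become isomorphisms $B\to A_0$ and $B\to C_0$, so the original amalgam will be recovered in the limit provided the two chains are amalgamated compatibly.

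Next I would construct, by induction on $n$, structures $D_n\in\C$ together with embeddings $D_n\hookrightarrow D_{n+1}$, $\alpha_n\colon A_n\to D_n$ and $\gamma_n\colon C_n\to D_n$, such that all triangles commute: $\alpha_{n+1}$ (resp.\ $\gamma_{n+1}$) extends $\alpha_n$ (resp.\ $\gamma_n$) along $D_n\hookrightarrow D_{n+1}$, and $\alpha_n,\gamma_n$ agree on $B$. The base step is a single application of the AP to the amalgam $(A_0,B,C_0,f,g)$ in $\C$. For the inductive step, given $\alpha_n\colon A_n\to D_n$, I first apply the AP to the amalgam $A_{n+1}\hookleftarrow A_n\xrightarrow{\alpha_n}D_n$, obtaining $D_n'\in\C$ with compatible embeddings of $A_{n+1}$ and $D_n$; then I apply the AP again to the amalgam $C_{n+1}\hookleftarrow C_n\to D_n'$, where $C_n\to D_n'$ is the composite $C_n\xrightarrow{\gamma_n}D_n\to D_n'$, obtaining $D_{n+1}\in\C$. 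Composing the resulting maps into $D_{n+1}$ yields $\alpha_{n+1}$, $\gamma_{n+1}$ and the chain map $D_n\hookrightarrow D_{n+1}$; a routine diagram chase confirms that commutativity, and in particular agreement on $B$, is preserved.

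Finally I would let $D$ be the direct limit (union) of the chain $D_0\hookrightarrow D_1\hookrightarrow\cdots$. Being a countable union of finitely generated structures, $D$ is countable, and every finitely generated substructure of $D$ lies inside some $D_n\in\C$, hence belongs to $\C$ because $\C$ is closed under taking finitely generated substructures; thus $D\in\ol\C$. The compatible families $(\alpha_n)$ and $(\gamma_n)$ then assemble into maps $\alpha\colon A\to D$ and $\gamma\colon C\to D$ satisfying $\alpha f=\gamma g$, and one checks they are embeddings, which completes the proof.

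The step I expect to require the most care is the very last one: verifying that a directed union of embeddings of first-order structures is again an embedding, so that $\alpha$ and $\gamma$ are genuine embeddings and not merely injective homomorphisms. This is precisely where the finitary (first-order) nature of the structures is used — each relation instance or term value realized in $D$ is already realized in some $D_n$ and is therefore reflected back along the embedding $\alpha_n$. The other point needing attention is the bookkeeping in the inductive step, namely keeping every triangle commutative after the double application of the AP; this is entirely routine but must be tracked explicitly.
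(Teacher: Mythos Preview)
The paper does not actually prove this lemma --- it is stated as folklore and left without proof. Your argument is correct and is the standard one. It is also essentially the same idea as the paper's proof of the closely related Lemma~\ref{sAP} (the strict-AP version), with one organizational difference: you interleave the two chains for $A$ and $C$ in a single induction, performing two amalgamations per step, whereas the paper first treats the case $B,C\in\C$, $A\in\ol\C$ by a single chain construction and then repeats the argument to pass from $C\in\C$ to $C\in\ol\C$. Both arrangements work; yours is more symmetric, the paper's needs slightly less bookkeeping at each stage. One minor point: when concluding $D\in\ol\C$ you use that $\C$ is closed under finitely generated substructures, which is not literally among the hypotheses of the lemma as stated but is implicit in the surrounding Fra\"{\i}ss\'e setting.
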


Now suppose that $(A,B,C,f,g)$ is an amalgam in a concrete category $\mathbf{C}$. Then the $\mathbf{C}$-pushout $P$ of
$(A,B,C,f,g)$, if it exists, is usually called the \emph{amalgamated free $\mathbf{C}$-sum} of this amalgam and denoted
by $A\ast_BC$. If, in addition, $A,B,C$ belong to a class $\C$ (contained in $\mathbf{C}$) which has the AP, then
$i_1,i_2$ must be embeddings. Indeed, by the AP there exist a structure $D\in\C$ and embeddings $j_1:A\to D$ and
$j_2:C\to D$; but then there is a homomorphism $u:A\ast_BC\to D$ such that $ui_1=j_1$ and $ui_2=j_2$, forcing $i_1,i_2$
to be embeddings as well.

Here we are going to consider \Fr classes $\C$ satisfying the following \emph{strict amalgamation property}: we require
that for any amalgam $(A,B,C,f,g)$ in $\C$ there exist a structure $P\in\C$ and embeddings $i_1:A\to P$ and $i_2:C\to
P$ such that
\begin{equation}\label{eq.zv}
\hbox{$\vcenter{\xymatrix{%
A \ar@{^{(}->}[r]^{i_1} & P \\
B \ar@{^{(}->}[r]^g \ar@{^{(}->}[u]^{f} & C \ar@{^{(}->}[u]_{i_2} }}$}
\end{equation}
is a pushout square in $\ol\C$, considered as a category of structures and homomorphisms. (Here we follow the
convention that `hooked' arrows always denote embeddings.) In other words, for any amalgam $(A,B,C,f,g)$ in $\C$, their
amalgamated free $\ol\C$-sum exists and belongs to $\C$. The previous lemma can be extended to apply to the strict AP.

\begin{lemma}\label{sAP}
Let $\C$ be a \Fr class enjoying the strict AP. Then for any amalgam $(A,B,C,f,g)$ such that $B\in\C$ and $A,C\in\ol\C$
there exists a $P\in\ol\C$ such that \eqref{eq.zv} is a pushout square in $\ol\C$.
\end{lemma}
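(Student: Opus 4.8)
The plan is to lift the finite strict AP to the countable case by a standard chain (direct limit) argument, mirroring the classical proof of Lemma~\ref{AP} but keeping track of the universal property that distinguishes the amalgamated free sum from an arbitrary amalgam. First I would write $A$ and $C$ as increasing unions of their finitely generated substructures, $A=\bigcup_n A_n$ and $C=\bigcup_n C_n$, arranged so that $f(B)\subseteq A_0$ and $g(B)\subseteq C_0$ (possible since $B$ is finitely generated, hence $f(B)$ and $g(B)$ are); replacing $B$ by its image we may assume $B\subseteq A_0\cap C_0$ and $f,g$ are inclusions. Then I would build, by induction on $n$, a chain of pushout squares in $\C$: apply the strict AP to the amalgam $(A_n,B,C_n,\hookrightarrow,\hookrightarrow)$ to get $P_n=A_n\ast_B C_n\in\C$ with embeddings $A_n\hookrightarrow P_n\hookleftarrow C_n$, and use the universal property of $P_n$ to obtain a (unique) connecting embedding $P_n\hookrightarrow P_{n+1}$ compatible with the inclusions $A_n\hookrightarrow A_{n+1}$ and $C_n\hookrightarrow C_{n+1}$. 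That these connecting maps are embeddings follows exactly as in the excerpt's discussion: $P_{n+1}$ contains $A_{n+1}\supseteq A_n$ and $C_{n+1}\supseteq C_n$ compatibly, so the induced map out of $P_n$ must be injective. Set $P=\varinjlim P_n=\bigcup_n P_n$; since each $P_n\in\C$ and the $P_n$ form a chain, $P\in\ol\C$, and the colimit inclusions give embeddings $i_1\colon A\hookrightarrow P$ and $i_2\colon C\hookrightarrow P$ with $i_1f=i_2g$, so \eqref{eq.zv} is a commuting square.

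It remains to verify that this square has the pushout property in $\ol\C$. So suppose $Q\in\ol\C$ is given with homomorphisms $j_1\colon A\to Q$ and $j_2\colon C\to Q$ satisfying $j_1f=j_2g$. Restricting to $A_n$ and $C_n$ gives a competing cocone over the span $(A_n,B,C_n)$, so by the pushout property of $P_n$ in $\ol\C$ there is a unique homomorphism $u_n\colon P_n\to Q$ with $u_n|_{A_n}=j_1|_{A_n}$ and $u_n|_{C_n}=j_2|_{C_n}$. Uniqueness forces the $u_n$ to be compatible with the connecting maps $P_n\hookrightarrow P_{n+1}$ — indeed $u_{n+1}$ restricted along that embedding is a cocone map out of $P_n$ agreeing with $j_1,j_2$ on $A_n,C_n$, hence equals $u_n$ — so they amalgamate to a well-defined $u\colon P\to Q$ with $u i_1=j_1$ and $u i_2=j_2$. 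One still has to check $u$ is a homomorphism: any finite tuple from $P$ lies in some $P_n$, on which $u$ agrees with the homomorphism $u_n$, so $u$ respects all operations and relations. Finally, uniqueness of $u$: another such $u'$ would restrict on each $P_n$ to a cocone map agreeing with $j_1,j_2$ on $A_n,C_n$, hence $u'|_{P_n}=u_n=u|_{P_n}$ for all $n$, so $u'=u$.

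I would also note the one point where the \emph{strict} hypothesis is doing real work: ordinary AP would only let us amalgamate each finite stage inside some $D_n\in\C$, but there would be no reason for the resulting extensions to be functorial in $n$ (the connecting maps need not exist or be canonical), and the colimit would carry no universal property. The strict AP delivers at each finite stage not just \emph{an} amalgam but \emph{the} amalgamated free $\ol\C$-sum, which is what makes the connecting maps canonical and lets the pushout property pass to the limit. The main obstacle in writing this up carefully is exactly the bookkeeping around these connecting maps — checking that the square $P_n\to P_{n+1}$, $A_n\to A_{n+1}$, $C_n\to C_{n+1}$ genuinely commutes and that the induced map is an embedding, and then that the $u_n$ patch together — all of which is routine diagram-chasing but needs to be stated cleanly to invoke the universal property of each $P_n$ at the right spot. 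Everything else (direct limits of structures in $\ol\C$ staying in $\ol\C$, a homomorphism being determined by its behaviour on finite tuples) is standard model theory.
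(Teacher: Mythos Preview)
Your approach is a legitimate alternative to the paper's. The paper proceeds in two stages: it first treats the case $B,C\in\C$, $A\in\ol\C$ by writing $A=\bigcup_n A_n$ with $A_0=B$ and building $P_{n+1}$ \emph{incrementally} as the pushout of $A_n\hookrightarrow A_{n+1}$ along $h_n\colon A_n\hookrightarrow P_n$ (starting from $P_0=C$); it then repeats the argument to pass from $C\in\C$ to $C\in\ol\C$. You instead handle $A$ and $C$ simultaneously by setting $P_n=A_n\ast_B C_n$ independently for each $n$ and connecting these via the universal property. Your organization is more symmetric, and you actually write out the verification that the limit square is a pushout in $\ol\C$, which the paper leaves as ``easily seen''.

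There is, however, one genuine soft spot. Your justification that the connecting map $P_n\to P_{n+1}$ is an embedding does not follow ``exactly as in the excerpt's discussion'': that passage shows the \emph{coprojections} $i_1,i_2$ of an amalgamated sum are embeddings (because they factor through embeddings into an ambient amalgam), not that an \emph{induced} map out of a pushout is. Knowing that $P_n\to P_{n+1}$ restricts to embeddings on the images of $A_n$ and of $C_n$ does not by itself force it to be injective on all of $P_n$. The correct fix is the pushout pasting lemma: since $A_n,A_{n+1},C_n,C_{n+1},P_n$ all lie in $\C$, strict AP gives $P_n'=A_{n+1}\ast_{A_n}P_n\in\C$ with $P_n\hookrightarrow P_n'$ an embedding (a coprojection), and then $C_{n+1}\ast_{C_n}P_n'\cong A_{n+1}\ast_B C_{n+1}=P_{n+1}$ with $P_n'\hookrightarrow P_{n+1}$ again a coprojection; the composite is your connecting map, by uniqueness. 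Once this is unpacked you are essentially performing the paper's two-stage construction in interleaved form --- which is precisely why the paper's incremental organization sidesteps the issue: there each $P_n\hookrightarrow P_{n+1}$ \emph{is} a coprojection of a strict-AP pushout, hence an embedding on the nose.
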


\begin{proof}
First of all, consider the case when $B,C\in\C$ and $A\in\ol\C$. Without loss of generality we can assume that $B$ is a
substructure of both $A$ and $C$ and $A\cap C = B$. Let $A = \{a_0, a_1, \ldots\}$ be an enumeration of the elements
of~$A$. Consider the sequence of finitely generated substructures of $A$ defined by $A_0=B$ and $A_{n+1}= \langle A_n
\cup \{a_{m_n}\} \rangle$ for all $n\geq 0$, where $m_n = \min\{i :\ a_i \notin A_n\}$. Clearly, $A_i \in \ol\C$ for
all $i$ since $A \in \ol\C$ and $A = \bigcup_{j<\omega} A_j$.

Now let $P_n$ be a sequence of structures and $h_n:A_n\to P_n$ a sequence of embeddings defined as follows. We set
$P_0=C$ and $h_0=\mathbf{1}_B$ (where throughout the paper $\mathbf{1}_X$ denotes the identity mapping on the set $X$),
that is, $h_0$ is just the inclusion map of $B$ into $C$. For $n\geq 0$, $P_{n+1}$ and $h_{n+1}$ are defined  by the
requirement that the following diagram is a pushout square in $\ol\C$:
$$
\xymatrix{
      P_n \ar@{^{(}->}[r] & P_{n+1} \\
      A_n \ar@{^{(}->}[r]^{\subseteq} \ar@{^{(}->}[u]^{h_n} & A_{n+1} \ar@{^{(}->}[u]_{h_{n+1}} }
$$
where the symbol $\subseteq$ will always denote the corresponding inclusion map. Note that $P_{n+1}$ exists and belongs
to $\C$ by the strict AP. Therefore, we have a sequence of pushout squares:
$$
    \xymatrix{
      {\llap{$C = \mathstrut$}}P_0 \ar@{^{(}->}[r] & P_1 \ar@{^{(}->}[r] & P_2 \ar@{^{(}->}[r] & {\ldots}\\
      {\llap{$B = \mathstrut$}}A_0 \ar@{^{(}->}[r]^{\subseteq} \ar@{^{(}->}[u]^{h_0}
                             & A_1 \ar@{^{(}->}[r]^{\subseteq} \ar@{^{(}->}[u]^{h_1}
                             & A_2 \ar@{^{(}->}[r]^{\subseteq} \ar@{^{(}->}[u]^{h_2} & {\ldots}
    }
$$
Without loss of generality we can assume that $P_n$ is a substructure of $P_{n+1}$ for all $n\geq 0$, so let $P =
\bigcup_{j<\omega} P_j$. Clearly, $P \in \ol\C$ is a union of a chain of structures from~$\C$. Moreover, $h_0 \subseteq
h_1 \subseteq h_2 \subseteq \ldots$ and $i_1 = \bigcup_{j<\omega} h_j$ is an embedding $A \hookrightarrow P$. Let $i_2$
denote the obvious embedding $C = P_0 \hookrightarrow P$. Then the diagram (\ref{eq.zv}) commutes and is easily seen to
be a pushout square in~$\ol\C$.

The statement of the lemma is now extended to the case when $C\in\ol\C$ essentially by repeating an argument analogous
to the above one.
\end{proof}

Let us now quickly review the general method for constructing the \Fr limit of a \Fr class $\C$, cf.\ Hodges \cite{H}.
Recall that a structure $C$ is a \emph{one-point extension} of its substructure $B$ of there is an element $x\in
C\setminus B$ such that $C$ is generated by $B\cup\{x\}$. Trivially, if $B$ is finitely generated, so is $C$.

For a structure $A\in\ol\C$, we first construct its extension $A^\star$. Let
$$
\{(B_i,C_i):\ i<\omega\}
$$
be the enumeration of all pairs of structures such that $B_i\in\C$ is a finitely generated substructure of $A$, while
$C_i$ is a one-point extension of $B_i$ belonging to $\C$; for each isomorphism type we take one such extension. (Here
we consider extensions $(B,C)$ and $(B,C')$ isomorphic if there is an isomorphism $\iota:C\to C'$ such that
$\iota|_B=\mathbf{1}_B$.) Now we construct a chain of structures $A_i$, $i\geq 0$, by successive amalgamations of these
extensions. More precisely, let $A_0=A$ and assume that $A_n$ has already been constructed for some $n\geq 0$ such that
$A\subseteq A_n\in\ol\C$. Then $B_n$ is a substructure of $A$ and so of $A_n$, whence
$(B_n,A_n,C_n,\mathbf{1}_{B_n},\mathbf{1}_{B_n})$ is an amalgam such that $B_n,C_n\in\C$ and $A_n\in\ol\C$. By Lemma
\ref{AP}, there exists a structure $A_{n+1}\in\ol\C$ which embeds this amalgam. As we have just seen, in the case when
$\C$ satisfies the strict AP we can be more specific at this point and let $A_{n+1}=A_n\ast_{B_n}C_n$ (the free sum is
taken with respect to $\ol\C$), by Lemma \ref{sAP}. Clearly, there is no loss of generality in assuming that
$A_n\subseteq A_{n+1}$, so that $A$ is a substructure of $A_{n+1}$. Finally, we let
$$A^\star=\bigcup_{n<\omega}A_n.$$

Of course, this construction can be iterated, so that we apply $\aleph_0$ successive rounds of amalgamation. Namely,
let $A^{(0)}=A$ and define $A^{(n+1)}=(A^{(n)})^\star$ for all $n\geq 0$. We set
$$F_A=\bigcup_{n<\omega}A^{(n)},$$
which is an extension of $A$. Any finitely generated substructure of $F_A$ must belong to some $A^{(m)}$, and since
$A^{(m)}\in\ol\C$ the finitely generated structure in question belongs to $\C$; hence, $F_A\in\ol\C$. A standard
model-theoretic argument (which can be found in \cite{D-Berg}) shows that $F_A$ is precisely the \Fr limit of $\C$.

\begin{proposition}\label{flim}
For any $A\in\ol\C$ we have $F_A\cong\Fl(\C)$.
\end{proposition}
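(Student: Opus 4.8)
The plan is to verify that the structure $F_A$, built by the iterated one-point extension construction, satisfies the two defining properties of $\Fl(\C)$ established by \Fr's theorem — namely that its age (the class of its finitely generated substructures, up to isomorphism) equals $\C$, and that it is ultrahomogeneous — together with countability, after which uniqueness of the \Fr limit gives $F_A \cong \Fl(\C)$. Countability is immediate since $F_A$ is a countable union of the countable structures $A^{(n)}$, each of which is a countable union of structures in $\ol\C$ (and $A \in \ol\C$ is countable by our standing convention). That $F_A \in \ol\C$ was already observed in the text: every finitely generated substructure of $F_A$ lies in some $A^{(m)} \in \ol\C$, hence belongs to $\C$. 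It remains to check the two nontrivial points: that \emph{every} structure in $\C$ embeds into $F_A$ (so the age is all of $\C$, not just part of it), and ultrahomogeneity.

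For the age, I would argue that $\C$ has countably many isomorphism types and, since it is a \Fr class, the JEP lets one build inside $\ol\C$ a countable structure containing a copy of each member of $\C$; but more directly, one shows that any $D \in \C$ embeds into $F_A$ by a back-and-forth-style argument using the one-point extensions: starting from the (possibly empty or one-element) generated substructure, one extends an embedding of a finitely generated substructure $B$ of $D$ one generator at a time, at each stage using that the relevant one-point extension $(B, C)$ appears (up to isomorphism) in the enumeration $\{(B_i, C_i)\}$ used to build some $A^{(n)\star}$, so the embedding of $B$ into $F_A$ extends to an embedding of $C$. Finitely many such steps embed $D$. The key point to spell out is that the construction is \emph{exhaustive}: in passing from $A^{(n)}$ to $A^{(n+1)} = (A^{(n)})^\star$, every pair consisting of a finitely generated substructure of $A^{(n)}$ and a one-point extension of it in $\C$ is realized, and iterating $\omega$ times ensures that extensions of substructures that only appear at later stages are also eventually realized — this is exactly why the outer iteration over $A^{(n)}$ is needed.

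For ultrahomogeneity, the standard criterion is the \emph{extension property}: if $B, B'$ are finitely generated substructures of $F_A$ with an isomorphism $\varphi : B \to B'$, and $C$ is a one-point extension of $B$ inside $F_A$, then $\varphi$ extends to an isomorphism between $C$ and some one-point extension $C'$ of $B'$ inside $F_A$. Granting this, a back-and-forth argument promotes any isomorphism between finitely generated substructures to an automorphism of $F_A$. To establish the extension property: $B'$ lies in some $A^{(m)}$, so at the stage where we form $(A^{(m)})^\star$ the enumeration includes a pair isomorphic to $(B', C')$ where $C' \cong C$ via an isomorphism restricting to $\varphi$ on $B$; by construction that one-point extension is amalgamated in, yielding the required copy of $C'$ in $A^{(m)\star} \subseteq F_A$. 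I expect the main obstacle to be bookkeeping rather than conceptual: one must be careful that "one-point extension" generates finitely generated structures (already noted: if $B$ is finitely generated so is $\langle B \cup \{x\}\rangle$), that every finitely generated substructure is obtained from a smaller one by finitely many one-point extensions so induction on the number of generators is legitimate, and — the genuinely delicate part — that the double limit (the inner $\star$-construction exhausting extensions of substructures present so far, the outer iteration catching up with substructures that only materialize later) really does realize \emph{all} finitely-generated one-point extension pairs $(B,C)$ with $B \subseteq F_A$ and $C \in \C$. Since this is precisely the content of the standard model-theoretic argument, I would cite \cite{H} or \cite{D-Berg} for the detailed verification and present the above as the structure of the proof.
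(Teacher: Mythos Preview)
Your proposal is correct and matches the paper's treatment: the paper does not actually prove Proposition~\ref{flim} but simply remarks that ``a standard model-theoretic argument (which can be found in \cite{D-Berg})'' yields the result, and you do the same while additionally sketching that argument (countability, age equal to $\C$, and the one-point extension property giving ultrahomogeneity via back-and-forth). This sketch is precisely the weak-homogeneity criterion of \cite[Lemma 6.1.3]{H} that the paper invokes explicitly in its proof of the analogous Proposition~\ref{flim2}, so your outline is in fact slightly more detailed than what the paper provides here.
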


Now we would like to modify the above construction, so that instead of amalgamating the extensions $C_i$ one by one
into $A^\star$ we do it ``all at once''. To this end, suppose we have a structure $A$ and a family of its substructures
$\{B_i:\ i\in I\}$, while $\{C_i:\ i\in I\}$ is another family of structures such that $A\cap C_i=B_i$ for each $i\in
I$ (so that $B_i$ is a substructure of $C_i$). Furthermore, for any $i,j\in I$, $i\neq j$, we assume that
$(C_i\setminus B_i)\cap(C_j\setminus B_j)=\es$. Then the tuple
$$(A,(B_i,C_i)_{i\in I})$$
is called a \emph{rooted multi-amalgam}. Assume $\C$ is a class of finitely generated structures of a fixed signature.
If we are concerned with a rooted multi-amalgam in which $A\in\ol\C$ is a countable structure, each $B_i$ is a finitely
generated substructure of $A$, $B_i\in\C$, and each $C_i$ is a finitely generated extension of $B_i$ lying in $\C$,
then we call $(A,(B_i,C_i)_{i\in I})$ a rooted multi-amalgam \emph{over} $\C$, and the structure $A$ is its
\emph{root}.

\smallskip

\begin{center}
\psset{unit=0.24pt}
\begin{pspicture}(0.0,0.0)(550.0,550.0)
    \radians
    \pscustom{
        \msave
        \translate(400.0,250.0)
        \rotate{3.141593}
        \psellipse[linewidth=0.07mm](0.0,0.0)(75.0,50.0)
        \mrestore
    }
    \pscustom{
        \msave
        \translate(351.451,459.568)
        \rotate{3.816334}
        \psellipse[linewidth=0.07mm](0.0,0.0)(80.0391,47.4186)
        \mrestore
    }
    \pscustom{
        \msave
        \translate(148.549,90.4321)
        \rotate{0.674741}
        \psellipse[linewidth=0.07mm](0.0,0.0)(80.0391,47.4186)
        \mrestore
    }
    \psline[linewidth=0.05mm](318.39,493.75)(322.618,493.75)
    \psline[linewidth=0.05mm](311.577,487.5)(329.017,487.5)
    \psline[linewidth=0.05mm](305.634,481.25)(334.77,481.25)
    \psline[linewidth=0.05mm](300.441,475.0)(340.0,475.0)
    \psline[linewidth=0.05mm](295.928,468.75)(344.794,468.75)
    \psline[linewidth=0.05mm](292.054,462.5)(349.216,462.5)
    \psline[linewidth=0.05mm](288.798,456.25)(353.312,456.25)
    \psline[linewidth=0.05mm](286.162,450.0)(357.121,450.0)
    \psline[linewidth=0.05mm](284.167,443.75)(360.673,443.75)
    \psline[linewidth=0.05mm](282.854,437.5)(363.99,437.5)
    \psline[linewidth=0.05mm](282.298,431.25)(367.094,431.25)
    \psline[linewidth=0.05mm](282.62,425.0)(370.0,425.0)
    \psline[linewidth=0.05mm](284.026,418.75)(372.723,418.75)
    \psline[linewidth=0.05mm](286.896,412.5)(366.684,412.5)
    \psline[linewidth=0.05mm](292.077,406.25)(354.955,406.25)
    \psline[linewidth=0.05mm](302.628,400.0)(337.854,400.0)
    \psline[linewidth=0.05mm](363.691,293.75)(399.578,293.75)
    \psline[linewidth=0.05mm](350.392,287.5)(399.812,287.5)
    \psline[linewidth=0.05mm](341.453,281.25)(399.953,281.25)
    \psline[linewidth=0.05mm](335.095,275.0)(400.0,275.0)
    \psline[linewidth=0.05mm](330.473,268.75)(399.953,268.75)
    \psline[linewidth=0.05mm](327.382,262.5)(399.812,262.5)
    \psline[linewidth=0.05mm](325.588,256.25)(399.578,256.25)
    \psline[linewidth=0.05mm](325.0,250.0)(399.248,250.0)
    \psline[linewidth=0.05mm](325.588,243.75)(398.824,243.75)
    \psline[linewidth=0.05mm](327.382,237.5)(398.303,237.5)
    \psline[linewidth=0.05mm](330.473,231.25)(397.685,231.25)
    \psline[linewidth=0.05mm](335.048,225.0)(396.969,225.0)
    \psline[linewidth=0.05mm](341.453,218.75)(396.154,218.75)
    \psline[linewidth=0.05mm](350.392,212.5)(395.237,212.5)
    \psline[linewidth=0.05mm](363.691,206.25)(394.217,206.25)
    \psline[linewidth=0.05mm](162.146,150.0)(197.372,150.0)
    \psline[linewidth=0.05mm](145.045,143.75)(207.923,143.75)
    \psline[linewidth=0.05mm](133.316,137.5)(213.104,137.5)
    \psline[linewidth=0.05mm](127.277,131.25)(215.974,131.25)
    \psline[linewidth=0.05mm](130.0,125.0)(217.38,125.0)
    \psline[linewidth=0.05mm](132.906,118.75)(217.702,118.75)
    \psline[linewidth=0.05mm](136.01,112.5)(217.146,112.5)
    \psline[linewidth=0.05mm](139.327,106.25)(215.833,106.25)
    \psline[linewidth=0.05mm](142.879,100.0)(213.838,100.0)
    \psline[linewidth=0.05mm](146.688,93.75)(211.202,93.75)
    \psline[linewidth=0.05mm](150.784,87.5)(207.946,87.5)
    \psline[linewidth=0.05mm](155.206,81.25)(204.072,81.25)
    \psline[linewidth=0.05mm](160.0,75.0)(199.559,75.0)
    \psline[linewidth=0.05mm](165.23,68.75)(194.366,68.75)
    \psline[linewidth=0.05mm](170.983,62.5)(188.423,62.5)
    \psline[linewidth=0.05mm](177.382,56.25)(181.61,56.25)
    \pscustom{
        \msave
        \translate(250.0,275.0)
        \rotate{1.570796}
        \psellipse[linewidth=0.07mm](0.0,0.0)(250.0,150.0)
        \mrestore
    }
    \put(115.097,479.095){\makebox(0,0){$A$}}
    \put(349.538,201.01){\makebox(0,0)[t]{$B_i$}}
    \put(319.573,385.354){\makebox(0,0)[t]{$B_j$}}
    \put(181.914,164.645){\makebox(0,0)[b]{$B_k$}}
    \put(486.999,250.269){\makebox(0,0)[l]{$C_i$}}
    \put(411.05,512.585){\makebox(0,0)[bl]{$C_j$}}
    \put(91.8887,32.5775){\makebox(0,0)[tr]{$C_k$}}
\end{pspicture}

\smallskip

{\small \textsl{Figure 1.1.} A rooted multi-amalgam}
\end{center}

\smallskip

The \emph{free $\C$-sum} of the rooted amalgam $(A,(B_i,C_i)_{i\in I})$ over $\C$ is a structure $D\in\ol\C$ with the
following properties:
\begin{itemize}
\item[(a)] there are embeddings $f:A\to D$ and $g_i:C_i\to D$, $i\in I$, such that $f|_{B_i}=g_i|_{B_i}$ for any $i\in
I$,
\item[(b)] for any structure $D'\in\ol\C$ and any homomorphisms $\varphi:A\to D'$, $\psi_i:C_i\to D'$, $i\in I$, such that
for any $i\in I$ we have $\varphi|_{B_i}=\psi_i|_{B_i}$, there exists a unique homomorphism $\delta:D\to D'$ extending
all the given homomorphisms, that is, such that we have  $\delta f=\varphi$ and $\delta g_i=\psi_i$ for all $i\in I$.
\end{itemize}
Speaking just a bit more loosely, the free sum is freely generated within $\ol\C$ by its partial substructure
$A\cup\bigcup_{i\in I}C_i$. Yet another way of saying this is that the free $\C$-sum is the colimit of the following
diagram:
\begin{equation}\label{eq.excl}
  \hbox{$\vcenter{
  \xymatrix{
        &   A \\
    B_i \ar@{_{(}->}[d]_{\subseteq} \ar@{^{(}->}[ur]^{\subseteq} &
    B_j \ar@{_{(}->}[d]_{\subseteq} \ar@{^{(}->}[u]^{\subseteq}  &
    B_k \ar@{_{(}->}[d]^{\subseteq} \ar@{^{(}->}[ul]_{\subseteq} &  {\ldots}\\
    C_i &   C_j   &  C_k  & {\ldots}
  }
  }$}
\end{equation}
The free $\C$-sum $D$ of $(A,(B_i,C_i)_{i\in I})$, if it exists, is unique up to isomorphism and generated by
$f(A)\cup\bigcup_{i\in I}g_i(C_i)$. We denote it by $\coprod\st(A,(B_i,C_i)_{i\in I})$.

\begin{lemma}\label{fsum}
Let $\C$ be a \Fr class enjoying the strict AP. Then for every rooted multi-amalgam $(A,(B_i,C_i)_{i\in I})$ over $\C$
the free $\C$-sum $\coprod\st(A, (B_i, C_i)_{i \in I})$ exists and belongs to $\ol\C$.
\end{lemma}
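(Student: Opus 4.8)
The plan is to build the free $\C$-sum $D$ of a rooted multi-amalgam $(A,(B_i,C_i)_{i\in I})$ by transfinite iteration of the strict amalgamation, one index $i$ at a time, mimicking the construction in Lemma \ref{sAP} but now amalgamating a whole family of one-point-ish extensions rather than a single $C$. The index set $I$ is countable (each $B_i$ is a finitely generated substructure of the countable $A$, and each $C_i$ is finitely generated, so there are only countably many isomorphism types — but more to the point, we only need countability of $I$, which we may assume by enumerating), so write $I=\{i_0,i_1,\ldots\}$. Set $D_0=A$, and having built $D_n\in\ol\C$ together with an embedding $A\hookrightarrow D_n$ and embeddings $g_{i_k}:C_{i_k}\to D_n$ for $k<n$ all agreeing with the inclusion on the respective $B_{i_k}$, form $D_{n+1}$ as the amalgamated free $\ol\C$-sum $D_n\ast_{B_{i_n}}C_{i_n}$, which exists and lies in $\C$-generated territory — more precisely in $\ol\C$ — by Lemma \ref{sAP} applied to the amalgam $(D_n,B_{i_n},C_{i_n})$ (here $B_{i_n}\in\C$, $D_n\in\ol\C$, $C_{i_n}\in\C$). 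As in Lemma \ref{sAP} we may take $D_n\subseteq D_{n+1}$, and the previously chosen embeddings $g_{i_k}$, $k<n$, extend along the inclusion $D_n\hookrightarrow D_{n+1}$; we define $g_{i_n}:C_{i_n}\to D_{n+1}$ to be the pushout embedding $i_2$. Finally put $D=\bigcup_{n<\omega}D_n$. Since $D$ is an increasing union of members of $\ol\C$ and membership in $\ol\C$ is determined by finitely generated substructures, $D\in\ol\C$; moreover $D$ is generated by $f(A)\cup\bigcup_{i\in I}g_i(C_i)$ because every finitely generated substructure of $D$ already sits inside some $D_n$, which is generated by $f(A)\cup\bigcup_{k<n}g_{i_k}(C_{i_k})$.

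It remains to verify the universal property (b). Given $D'\in\ol\C$ and homomorphisms $\varphi:A\to D'$ and $\psi_i:C_i\to D'$ with $\varphi|_{B_i}=\psi_i|_{B_i}$ for all $i$, I construct the mediating homomorphism $\delta:D\to D'$ by induction on $n$: set $\delta_0=\varphi:D_0=A\to D'$, and given $\delta_n:D_n\to D'$ extending $\varphi$ and all $\psi_{i_k}$ for $k<n$, note that $\delta_n f_n=\psi_{i_n}|_{B_{i_n}}$ (where $f_n:B_{i_n}\hookrightarrow D_n$) equals $\psi_{i_n}\circ(\text{incl }B_{i_n}\hookrightarrow C_{i_n})$, so the pushout property of the square defining $D_{n+1}=D_n\ast_{B_{i_n}}C_{i_n}$ yields a unique $\delta_{n+1}:D_{n+1}\to D'$ with $\delta_{n+1}|_{D_n}=\delta_n$ and $\delta_{n+1}g_{i_n}=\psi_{i_n}$. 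The $\delta_n$ cohere, so $\delta=\bigcup_n\delta_n$ is a well-defined homomorphism $D\to D'$ with $\delta f=\varphi$ and $\delta g_i=\psi_i$ for every $i$. Uniqueness of $\delta$ is immediate: any homomorphism satisfying these equations agrees with $\varphi$ on $A$ and with $\psi_i$ on each $g_i(C_i)$, hence on the generating set $f(A)\cup\bigcup_i g_i(C_i)$, hence everywhere.

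A few points deserve care. First, one should check at each stage that the new embedding $g_{i_n}$ agrees with (the image of) $f$ on $B_{i_n}$; this is exactly the commutativity of the pushout square, since $f$ restricted to $B_{i_n}$ factors through $D_n$ and the square says $i_1 f_n = i_2\circ(B_{i_n}\hookrightarrow C_{i_n})$. Second, the disjointness hypothesis $(C_i\setminus B_i)\cap(C_j\setminus B_j)=\es$ is what lets us regard the $g_i$ as having images that overlap only where forced, but it is not strictly needed for the colimit characterization — the universal property is stated abstractly and the proof above never uses disjointness beyond bookkeeping; it merely makes the picture in \eqref{eq.excl} literal. Third, a fairness/cofinality remark: because $I$ is countable we exhaust it in $\omega$ steps, so no extra iteration (as in the $A^\star$ versus $F_A$ distinction) is needed here — we are not trying to saturate, only to amalgamate a fixed prescribed family. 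The main obstacle, such as it is, is purely organizational: keeping the three compatibility conditions (the tower of inclusions $D_n\subseteq D_{n+1}$, the coherence of the embeddings $g_i$, and the coherence of the $\delta_n$) straight through the induction, and invoking Lemma \ref{sAP} with the correct amalgam at each stage — there is no genuine mathematical difficulty once strict AP is in hand, since strict AP is precisely the hypothesis that converts each one-step amalgamation into an honest pushout in $\ol\C$.
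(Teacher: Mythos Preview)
Your proof is correct and follows essentially the same route as the paper: enumerate $I$, set $P_0=A$, form $P_{n+1}$ as the $\ol\C$-pushout $P_n\ast_{B_{i_n}}C_{i_n}$ via Lemma~\ref{sAP}, take the union, and obtain the mediating map by inductively applying the pushout universal property at each stage. The only cosmetic difference is in the uniqueness argument: the paper checks uniqueness stage by stage (restricting a rival $m'$ to each $P_n$ and invoking pushout uniqueness), whereas you argue from the fact that $D$ is generated by $f(A)\cup\bigcup_i g_i(C_i)$; both are valid and amount to the same thing.
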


\begin{proof}
Since each $B_i, C_i$ is finitely generated, the index set $I$ is countable, so there is no loss of generality if we
assume that $I$ is in fact the ordinal $\omega$. Let us inductively define a sequence of structures $P_i$, $i\geq 0$,
as follows. First of all, let $P_0=A$. Given $P_n$, let $P_{n+1}$ be the ($\ol\C$-)pushout of the amalgam
$(C_n,B_n,P_n,\mathbf{1}_{B_n},\mathbf{1}_{B_n})$ (note that $B_n$ is a substructure of $A=P_0$ and so of each $P_n$).
This pushout exists and belongs to $\ol\C$ by Lemma \ref{sAP}. Also, there will be no loss of generality in assuming
that $P_n$ is actually a substructure of $P_{n+1}$ for each $n\geq 0$.

Let $P =\bigcup_{j<\omega} P_j$. Clearly, $P \in \ol\C$, since $P$ is the union of a chain of structures from $\ol\C$.
Let us show that $P$ satisfies the properties required by the free $\C$-sum of $(A,(B_i,C_i)_{i\in I})$, i.e.\ that it
is the colimit of the diagram \eqref{eq.excl} in $\ol\C$. Obviously, the condition (a) from the definition of a free
$\C$-sum is satisfied, as both $A$ and all $C_i$'s (in fact, the whole rooted multi-amalgam $(A,(B_i,C_i)_{i<\omega})$)
are contained in $P$, so the corresponding inclusion mappings will take the role of $f$ and the $g_i$'s.

\begin{figure}[t]
\centering
$$
  \begin{array}{ccc}
    \xymatrix{
                          &     & Q\\
      C_0 \ar@{^{(}->}[r]^\subseteq \ar@/^2ex/[urr]^{\psi_0} & P_1 \ar[ur]^{m_1}\\
      B_0 \ar@{^{(}->}[r]^\subseteq \ar@{^{(}->}[u]^\subseteq & A \ar@{^{(}->}[u]^\subseteq \ar@/_1ex/[uur]_{\varphi}
    }
  &
    \xymatrix{
      C_1 \ar[rr]^{\psi_1} & & Q\\
      B_1 \ar@{^{(}->}[r]^\subseteq \ar@{^{(}->}[u]^\subseteq & A \ar[ur]^\varphi \ar@{^{(}->}[r]^\subseteq &
      P_1 \ar[u]_{m_1}
    }
  &
    \xymatrix{
                               &          & Q\\
      C_n \ar@{^{(}->}[r]^\subseteq \ar@/^3ex/[urr]^{\psi_n} & P_{n+1} \ar[ur]^{m_{n+1}} \\
      B_n \ar@{^{(}->}[r]^\subseteq \ar@{^{(}->}[u]^\subseteq & P_n \ar@{^{(}->}[u]^\subseteq \ar@/_1ex/[uur]_{m_n}
    }
  \\
    (a) & (b) & (c)
  \end{array}
$$
{\small \textsl{Figure 1.2.} Three commuting diagrams}
\end{figure}

Now let $Q\in\ol\C$ and let $\varphi:A\to Q$, $\psi_i:C_i\to Q$, $i\geq 0$, be homomorphisms such that
$\varphi|_{B_i}=\psi_i|_{B_i}$ for all $i$. By the construction of $P_1$, there is a unique homomorphism $m_1 : P_1 \to
Q$ such that the diagram in Fig.\ 1.2~$(a)$ commutes. The diagram in Fig.\ 1.2~$(b)$ also commutes: the triangle
commutes by Fig.\ 1.2~$(a)$, while the square commutes by the initial assumption on $Q$. Hence, the outer square in the
diagram in Fig.\ 1.2~$(c)$ commutes for $n = 1$, so there is a unique homomorphism $m_2 : P_2 \to Q$ that makes the
entire diagram commutative for $n = 1$.  By induction, for every $n \geq 1$, there is a unique homomorphism $m_{n+1} :
P_{n+1} \to Q$ such that the diagram in Fig.\ 1.2~$(c)$ commutes. Thus we have a sequence of homomorphisms $\varphi
\subseteq m_1 \subseteq m_2 \ldots$, so define $m=\bigcup_{j<\omega}m_j$. Clearly, $m : P \to Q$ and the diagram in
Fig.\ 1.3~$(a)$ commutes or all $i\geq 0$.

\begin{figure}[h]
\centering
$$
  \begin{array}{ccc}
  \xymatrix{
     A \ar[rr]^\varphi  \ar@/_2ex/@{^{(}->}[ddrr]^\subseteq& & Q\\
     B_i \ar@{_{(}->}[d]_\subseteq \ar@{^{(}->}[u]^\subseteq \\
     C_i \ar@/^2ex/[uurr]_(0.7){\psi_i} \ar@{^{(}->}[rr]^\subseteq & & P \ar[uu]_m
  }
  &
  \xymatrix{
     A \ar[rr]^\varphi  \ar@/_2ex/@{^{(}->}[ddrr]^\subseteq& & Q\\
     B_1 \ar@{_{(}->}[d]_\subseteq \ar@{^{(}->}[u]^\subseteq \\
     C_1 \ar@/^2ex/[uurr]_(0.7){\psi_1} \ar@{^{(}->}[rr]^\subseteq & & P_1 \ar[uu]_{m'_1}
  }
  &
    \xymatrix{
                               &           & Q\\
      C_2 \ar@{^{(}->}[r]^\subseteq \ar@/^3ex/[urr]^{\psi_2}  & P_2 \ar[ur]^{m'_2} \\
      B_2 \ar@{^{(}->}[r]^\subseteq \ar@{^{(}->}[u]^\subseteq & P_1 \ar@{^{(}->}[u]^\subseteq \ar@/_1ex/[uur]_{m'_1}
    }
  \\
  (a) & (b) & (c)
  \end{array}
$$
{\small \textsl{Figure 1.3.} The mediating homomorphism}
\end{figure}

Let $m' : P \to Q$ be another homomorphism that makes the diagram in Fig.\ 1.3~$(a)$ commutative for all $i\geq 0$, and
let us show that $m' = m$. Let $m'_n = m'|_{P_n} : P_n \to Q$ for $n \geq 1$, and let $m'_0 = m'|_A$. Clearly, $m'_0 =
\varphi$ (see Fig.\ 1.3~$(a)$). Specializing to $i = 0$ and using the fact that $C_0$ and $A$ are substructures of
$P_1$, we get the diagram in Fig.\ 1.3~$(b)$. Therefore, $m'_1 = m_1$ since $m_1$ is the unique homomorphism $P_1 \to
Q$ that makes the diagram commutative. On the other hand, the diagram in Fig.\ 1.3~$(c)$ commutes (the two triangles
commute because of the assumptions on $m'$ and the fact that $m'_1$ is a restriction of $m'_2$ and $m'_2$ is a
restriction of $m'$). Therefore, $m'_2 = m_2$ since $m_2$ is the unique homomorphism $P_2 \to Q$ that makes the diagram
commutative. Proceeding by induction we obtain $m'_n=m_n$ for every $n \geq 1$, so $m' = m$. This completes the proof
that $P$ is the colimit of the diagram \eqref{eq.excl} in $\ol\C$, i.e.\ that $P$ is the free $\C$-sum of
$(A,(B_i,C_i)_{i\in I})$.
\end{proof}

\begin{example}\rm\label{examp}
\begin{enumerate}
\item Finite simple graphs have the strict AP, since the free sum of an amalgam of finite graphs is basically the
amalgam itself. So, the free sum of a rooted multi-amalgam $(A,(B_i,C_i)_{i\in I})$ over the class of finite simple
graphs is the graph with the vertex set $A\cup\bigcup_{i<\omega}C_i$ in which the set of edges is just the union of
edges of $A$ and $C_i$'s.
\item Finite posets have the strict AP: consequently, the free sum $\coprod\st(A,(B_i,C_i)_{i\in I})$ of a rooted
multi-amalgam over the class of finite posets is just the transitive closure of the reflexive and antisymmetric
relation on $A\cup\bigcup_{i<\omega}C_i$ obtained as the union of order relations on $A$ and $C_i$'s.
\item As is well-known (see \cite{Gr}), if $\mathbf{V}$ if a variety of algebras, then $\coprod\st(A,(B_i,C_i)_{i\in I})$
is just the free algebra freely generated in $\mathbf{V}$ by the partial algebra $(A,(B_i,C_i)_{i\in I})$, which exists
if and only if the rooted multi-amalgam $(A,(B_i,C_i)_{i\in I})$ can be embedded into a member of $\mathbf{V}$;
essentially, for algebraic structures the ordinary and the strict AP for the class of finitely generated members of
$\mathbf{V}$ are equivalent (bearing in mind Lemmata \ref{AP} and \ref{sAP}). Therefore, finite semilattices, finite
distributive lattices and finite Boolean algebras, respectively, have the strict AP and thus all free sums of their
rooted multi-amalgams exist in the corresponding varieties.
\end{enumerate}
\end{example}

Given the notion of a rooted multi-amalgam and the conclusion of Lemma \ref{fsum}, we change a bit the construction of
the extension $A^\star$ of a structure $A\in\ol\C$. As before, let $\{(B_i,C_i):\ i<\omega\}$ be an arbitrary
enumeration of all pairs where $B_i$ is a finitely generated substructure of $A$ and $C_i$ is a one-point extension of
$B_i$, such that each isomorphism type of such pairs is represented exactly once. Then, clearly, $(A,(B_i,C_i)_{i\in
I})$ is a rooted multi-amalgam over $\C$, so we redefine
$$
\textstyle A^\star=\coprod\st(A,(B_i,C_i)_{i\in I}).
$$
It is not difficult to show that if we iterate this construction and define
$$F\st_A=\bigcup_{n<\omega}A^{(n)},$$
then the result is again isomorphic to the \Fr limit of $\C$. In other words, we have the following variant of the
previous proposition.

\begin{proposition}\label{flim2}
For any $A\in\ol\C$ we have $F\st_A\cong\Fl(\C)$.
\end{proposition}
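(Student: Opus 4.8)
The statement to prove is Proposition~\ref{flim2}: that the iterated construction $F\st_A=\bigcup_{n<\omega}A^{(n)}$, where now $A^{(n+1)}=(A^{(n)})\st=\coprod\st(A^{(n)},(B_i,C_i)_{i<\omega})$, yields a structure isomorphic to $\Fl(\C)$. Since Proposition~\ref{flim} already tells us that the analogous construction with ordinary (one-by-one) amalgamation produces $\Fl(\C)$, the cleanest approach is \emph{not} to re-run the model-theoretic characterization of the \Fr limit from scratch, but rather to compare the two constructions. The key observation is that the free $\C$-sum $\coprod\st(A,(B_i,C_i)_{i<\omega})$, as built in the proof of Lemma~\ref{fsum}, is precisely a union of a chain $P_0=A\subseteq P_1\subseteq P_2\subseteq\cdots$ where $P_{n+1}$ is the $\ol\C$-pushout of $(C_n,B_n,P_n,\mathbf 1_{B_n},\mathbf 1_{B_n})$ --- i.e.\ $P_{n+1}=P_n\ast_{B_n}C_n$. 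But this is \emph{exactly} the chain appearing in the original construction of $A^\star$ (as described just before Proposition~\ref{flim}, in the ``more specific'' strict-AP version), because the enumeration $\{(B_i,C_i):i<\omega\}$ used there and the enumeration used to form the rooted multi-amalgam are the same kind of object: all pairs consisting of a finitely generated substructure $B_i$ of $A$ together with a one-point extension $C_i\in\C$, one per isomorphism type. Hence the two definitions of $A^\star$ literally coincide (up to the harmless identification $P=\bigcup P_j$), and therefore so do the two definitions of $F_A$ and $F\st_A$.

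More carefully, here are the steps I would carry out. First, I would unwind the proof of Lemma~\ref{fsum} to record explicitly that, for a rooted multi-amalgam $(A,(B_i,C_i)_{i<\omega})$ over $\C$, the free $\C$-sum produced there is $\bigcup_{n<\omega}P_n$ with $P_0=A$ and $P_{n+1}=P_n\ast_{B_n}C_n$ (the pushout in $\ol\C$). Second, I would recall that in the construction of $A^\star$ preceding Proposition~\ref{flim}, under the strict AP one sets $A_0=A$ and $A_{n+1}=A_n\ast_{B_n}C_n$, using an enumeration $\{(B_i,C_i):i<\omega\}$ of exactly the same pairs; thus $A_n=P_n$ for all $n$ and the ``old'' $A^\star=\bigcup A_n$ equals the ``new'' $A^\star=\coprod\st(A,(B_i,C_i)_{i<\omega})$. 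Third, since the iteration defining $F_A$ and $F\st_A$ is the same --- $A^{(0)}=A$, $A^{(n+1)}=(A^{(n)})^\star$, take the union --- and the $\star$-operations agree, we get $F\st_A=F_A$, and then $F_A\cong\Fl(\C)$ by Proposition~\ref{flim}.

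One genuinely needs the strict AP here (which is the hypothesis on $\C$ in effect from the paragraph introducing \eqref{eq.zv} onward, and which is what makes Lemma~\ref{fsum} applicable at all): without it, the free sum need not belong to $\ol\C$ and the identification with the pushout chain breaks down. It is worth noting in the write-up that the pushout is only determined up to isomorphism, so strictly speaking $A^\star$ is well-defined only up to isomorphism --- but this is already the situation in Proposition~\ref{flim}, and it is exactly what the statement $F\st_A\cong\Fl(\C)$ (rather than an equality) accommodates, so no extra care is required beyond fixing representatives once and for all.

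**Expected main obstacle.** The only subtlety --- and the place I would be most careful --- is making the ``it's literally the same chain'' argument airtight: one must check that the particular sequence of pushouts assembled inside the proof of Lemma~\ref{fsum} coincides (and not merely is isomorphic in some uncontrolled way) with the sequence used to define the old $A^\star$, which requires pinning down that both use the \emph{same} enumeration of pairs $(B_i,C_i)$ and that the pushout $P_n\ast_{B_n}C_n$ is formed over the \emph{same} base $B_n\subseteq P_n$ (legitimate since $B_n$ is a substructure of $A=P_0$, hence of every $P_n$). Once that bookkeeping is done, the proof reduces to ``by construction'' plus an appeal to Proposition~\ref{flim}; alternatively, if one prefers a self-contained argument, one can instead verify directly that $F\st_A\in\ol\C$ (a countable union of members of $\ol\C$, each finitely generated substructure lying in some $A^{(m)}\in\ol\C$ hence in $\C$) and that it satisfies the standard extension property characterizing $\Fl(\C)$: every isomorphism between finitely generated substructures of $F\st_A$, and more relevantly every one-point extension in $\C$ of a finitely generated substructure, is realized inside $F\st_A$ --- which holds because such a substructure sits inside some $A^{(m)}$ and its one-point extensions are all amalgamated into $(A^{(m)})^\star=A^{(m+1)}$ by the defining property of the free $\C$-sum. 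I would present the comparison argument as the main proof and relegate the direct verification to a remark, since the former is shorter and leans on machinery already in place.
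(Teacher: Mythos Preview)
Your proposal is correct, but your primary route differs from the paper's. The paper proves Proposition~\ref{flim2} directly: it invokes Hodges' characterization (a countable structure in $\ol\C$ is the \Fr limit iff it is weakly homogeneous, i.e.\ realizes all one-point extensions) and verifies this for $F\st_A$ by observing that any finitely generated $B\subseteq F\st_A$ lies in some $A^{(k)}$, whence the pair $(B,C)$ appears (up to isomorphism) among the $(B_p^{(k)},C_p^{(k)})$ and so $C$ embeds into $A^{(k+1)}$ by property~(a) of the free $\C$-sum. This is precisely the argument you describe as your \emph{alternative} and relegate to a remark.

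Your main argument instead identifies the ``new'' $A^\star$ with the ``old'' one by unpacking the proof of Lemma~\ref{fsum}: the free $\C$-sum is built there as $\bigcup_n P_n$ with $P_0=A$ and $P_{n+1}=P_n\ast_{B_n}C_n$, which is exactly the chain used (under the strict AP) in the original construction of $A^\star$ preceding Proposition~\ref{flim}. Hence $F\st_A=F_A$ and Proposition~\ref{flim} finishes. This is sound and arguably shorter, but it trades on the \emph{particular construction} inside the proof of Lemma~\ref{fsum} rather than on the universal property of the free $\C$-sum; the paper's argument uses only the latter (specifically, the embeddings $g_i:C_i\hookrightarrow A^\star$), so it is slightly more robust to changes in how Lemma~\ref{fsum} is proved. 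Either approach is acceptable; your bookkeeping caveats about fixing the same enumeration and the same pushout representatives are the right things to flag.
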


\begin{proof}
By \cite[Lemma 6.1.3]{H}, since $F\st_A$ is countably infinite, it suffices to show that it is \emph{weakly
homogeneous} (or that it \emph{realizes all one-point extensions} in the terminology of \cite{DK}), i.e.\ that for each
finitely generated substructure $B$ of $F\st_A$ and its one-point extension $C\in\C$ there should be an embedding
$f:C\to F\st_A$ such that $f|_B=\mathbf{1}_B$.

Indeed, since $B$ is finitely generated, there exists an index $k<\omega$ such that $B\subseteq A^{(k)}$. Furthermore,
if $\{(B_i^{(k)},C_i^{(k)}):\ i<\omega\}$ is the enumeration of all isomorphism types of finitely generated
substructures of $A^{(k)}$ and its one-point extensions, yielding the components of the rooted multi-amalgam the free
sum of which is $A^{(k+1)}$, then for some $p<\omega$ we have $B=B_p^{(k)}$ and $C\cong C_p^{(k)}$. By the properties
of the free $\C$-sum, the latter isomorphism gives rise to an embedding of $C$ into $A^{(k+1)}$, and thus into
$F\st_A$.
\end{proof}

\section{A general embedding theorem}

A fundamental property of the free $\C$-sum is that in order to extend an endomorphism $\varphi$ of a structure $A$ to
an endomorphism $\wh\varphi$ of $A^\star$, one only needs to define homomorphisms $\psi_i:C_i\to A^\star$, $i<\omega$,
which agree with $\varphi$ on each $B_i$. This underlines the importance of the following configuration: we have a
finitely generated structure $B\in\C$ and a surjective homomorphism $f:B\to B'$, where $B'\in\C$; at the same time, we
have a one-point extension $C$ of $B$. In fact, we have a span
$$
\xymatrix{C & B \ar@{_{(}->}[l]_{\mathbf{1}_B} \ar@{->>}[r]^f & B'}
$$
where we follow the convention that `double-headed' arrows stand for surjective homomorphisms (recall that `hooked'
ones denote embeddings).

We say that a class $\C$ enjoys the \emph{one-point homomorphism extension property} (1PHEP) if for any $B,B',C\in\C$
forming a configuration as above there exists an extension $C'$ of $B'$ and a surjective homomorphism $f\st:C\to C'$
such that $f\st|_B=f$; in other words, the following diagram commutes:
$$
\xymatrix{C \ar@{->>}[r]^{f\st} & C'\\
B \ar@{->>}[r]^f \ar@{^{(}->}[u]^{\mathbf{1}_B} & B' \ar@{^{(}->}[u]_{\mathbf{1}_{B'}}}
$$
As it easily turns out, $C'$ is either a one-point extension of $B'$, or it coincides with $B'$. As noticed in Remark
3.1 of \cite{D-Berg}, this property of a \Fr class $\C$ is equivalent to the \emph{homomorphism amalgamation property}
(HAP) intimately related to homomorphism-homogeneity. In fact, by Proposition 3.8 of \cite{D-Berg}, the
ultrahomogeneous structure $\Fl(\C)$ is homomorphism-homoge\-ne\-ous if and only if $\C$ has the 1PHEP. Results of
\cite{D-Berg} show that 1PHEP is enjoyed by the classes of finite simple graphs, finite posets, finite metric spaces
(with rational distances), as well as by each class of finitely generated algebras with the \emph{congruence extension
property} (CEP) \cite{Gr} closed under taking homomorphic images, such as finite semilattices.

We need a stronger form of the 1PHEP for our purpose, related to a construction that in a way represents the analogue
of the amalgamated free sum for the spans considered here. We say that a class of structures $\C$ has the \emph{strict
1PHEP} if any span of the form
$$
\xymatrix{C & B \ar@{_{(}->}[l]_i \ar@{->>}[r]^f & B'}
$$
where $B,B',C\in\C$ and $C$ is a one-point extension of $B$, has a pushout $P\in\C$ with respect to $\ol\C$ as a
category (of structures and homomorphisms), and if
$$
\xymatrix{C \ar[r]^{f'} & P\\
B \ar@{->>}[r]^f \ar@{^{(}->}[u]^i & B' \ar[u]_{i'}}
$$
is a pushout square in $\ol\C$, then $i'$ is an embedding and the homomorphism $f'$ is surjective. In other words, the
pushout of the above span serves as a witness in the particular instance of the 1PHEP for the span in question.

The following is the main result of this paper.

\begin{theorem}\label{main}
Let $\C$ be a \Fr class satisfying the following three properties:
\begin{itemize}
\item[(i)] $\C$ enjoys the strict AP.
\item[(ii)] $\C$ enjoys the strict 1PHEP.
\item[(iii)] For any $B,C\in\C$ such that $C$ is a one-point extension of $B$, the pointwise stabilizer of $B$ in
$\Aut(C)$ is trivial.
\end{itemize}
Then for any $A\in\ol\C$ there is an embedding of $\End(A)$ into $\End(A^\star)$. Consequently, if $F=\Fl(\C)$, then
$\End(A)$ embeds into $\End(F)$.
\end{theorem}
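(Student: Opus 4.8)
The plan is to construct, for each $\varphi\in\End(A)$, an endomorphism $\wh\varphi\in\End(A^\star)$ extending $\varphi$, and to verify that $\varphi\mapsto\wh\varphi$ is an injective homomorphism of monoids. Recall from Section~2 that $A^\star=\coprod\st(A,(B_i,C_i)_{i<\omega})$, where each $B_i$ is a finitely generated substructure of $A$ and $C_i$ is a one-point extension of $B_i$. By the universal property of the free $\C$-sum (Lemma~\ref{fsum}, property~(b)), to define a homomorphism $\wh\varphi:A^\star\to A^\star$ it suffices to give a homomorphism $\varphi:A\to A^\star$ (the composite of $\varphi$ with the inclusion $A\hookrightarrow A^\star$ will do) together with homomorphisms $\psi_i:C_i\to A^\star$ for each $i$ such that $\psi_i|_{B_i}=\varphi|_{B_i}$; these then glue to a unique $\wh\varphi$ with $\wh\varphi|_A=\varphi$ and $\wh\varphi g_i=\psi_i$.

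The heart of the matter is producing the $\psi_i$. Fix $i$ and consider the one-point extension $C_i$ of $B_i$ together with the homomorphism $\varphi|_{B_i}:B_i\to A$. Let $B_i'$ be the (finitely generated, hence in $\C$) image $\varphi(B_i)$, so $\varphi|_{B_i}$ factors as a surjection $f:B_i\twoheadrightarrow B_i'$ followed by the inclusion $B_i'\hookrightarrow A$. Now the span $C_i\hookleftarrow B_i\twoheadrightarrow B_i'$ is exactly of the form to which the strict 1PHEP (hypothesis~(ii)) applies: it yields a pushout $P_i\in\C$ in $\ol\C$, with $B_i'\hookrightarrow P_i$ an embedding and $C_i\twoheadrightarrow P_i$ surjective; moreover $P_i$ is either $B_i'$ itself or a one-point extension of $B_i'$. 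In the latter case, by the defining property of the stage in the construction producing $A^\star$ (the pair $(B_i',P_i)$ appears, up to isomorphism, among the enumerated one-point extensions used to build $A^\star$), there is an embedding $P_i\hookrightarrow A^\star$ fixing $B_i'$; in the former case $P_i=B_i'\subseteq A\subseteq A^\star$ already. Composing $C_i\twoheadrightarrow P_i\hookrightarrow A^\star$ gives the desired $\psi_i:C_i\to A^\star$, and by construction $\psi_i|_{B_i}$ equals $f$ followed by $B_i'\hookrightarrow A^\star$, i.e.\ equals $\varphi|_{B_i}$, as required.

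It remains to check functoriality and injectivity. Injectivity is immediate: $\wh\varphi$ extends $\varphi$, so $\wh\varphi=\wh\psi$ forces $\varphi=\psi$. For $\wh{\varphi\psi}=\wh\varphi\,\wh\psi$ one uses uniqueness in the universal property of $A^\star$: both sides restrict to $\varphi\psi$ on $A$, so it suffices to see that $\wh\varphi\,\wh\psi\circ g_i$ agrees with $\wh{\varphi\psi}\circ g_i$ for each $i$, and both are homomorphisms $C_i\to A^\star$ extending $(\varphi\psi)|_{B_i}$. The main obstacle here is that $\wh{\varphi\psi}g_i$ and $\wh\varphi\,\wh\psi g_i$ need not be chosen by the same recipe, so to identify them I must invoke hypothesis~(iii): the pointwise stabilizer of $B_i$ in $\Aut(C_i)$ being trivial will force any two homomorphisms $C_i\to A^\star$ that agree on $B_i$ and both factor through the \emph{same} $\C$-quotient to coincide — more precisely, I expect to argue that the surjective map $C_i\twoheadrightarrow P_i$ is, up to a unique isomorphism on the target, independent of the choices, and that there is no room for a nontrivial automorphism of $C_i$ over $B_i$ to intervene. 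This rigidity, fed into the uniqueness clause of the colimit, yields the equality of the two extensions and hence the homomorphism property of $\varphi\mapsto\wh\varphi$. The final sentence, $\End(A)\hookrightarrow\End(F)$, then follows since $F\cong F^\star_A\supseteq A^\star$ and the same gluing construction propagates the embedding through the $\omega$-chain $A\subseteq A^\star\subseteq A^{(2)}\subseteq\cdots$, with compatibility across stages again secured by the uniqueness in the universal property (and Proposition~\ref{flim2} identifying the union with $\Fl(\C)$).
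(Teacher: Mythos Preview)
Your overall architecture matches the paper's: build $\wh\varphi$ from the universal property of $A^\star=\coprod\st(A,(B_i,C_i)_{i<\omega})$ by producing maps $\psi_i^{(\varphi)}:C_i\to A^\star$ via the strict 1PHEP pushout, then argue functoriality using~(iii). The construction of the $\psi_i$ and the injectivity argument are fine.

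The functoriality step, however, is where the content lies, and your sketch leaves out the two points that make it work. First, you need to arrange that $\psi_i^{(\varphi)}$ maps $C_i$ \emph{onto} a specific $C_j$ in the enumeration (the representative of the isomorphism type of your pushout $P_i$ over $B_i'=B_j$), not merely into $A^\star$. This is essential because to compute $\wh{\varphi'}\wh\varphi\,g_i$ you need $\wh\varphi g_i$ to factor through some $g_j$ so that $\wh{\varphi'}g_j=\psi_j^{(\varphi')}$ can be applied. Second---and this is the substantive gap---you must show that $\psi_j^{(\varphi')}\psi_i^{(\varphi)}$ and $\psi_i^{(\varphi'\varphi)}$ land in the \emph{same} $C_k$ and differ only by an automorphism of $C_k$ over $B_k$. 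The paper does this by observing that pushout squares paste: since $B_iB_jC_iC_j$ and $B_jB_kC_jC_k$ are pushout squares, so is their composite $B_iB_kC_iC_k$; hence both the composite map and the direct map $\psi_i^{(\varphi'\varphi)}$ realize the pushout of the same span $C_i\hookleftarrow B_i\twoheadrightarrow B_k$, whence they take values in the same representative $C_k$ and are intertwined by mutually inverse homomorphisms $m,n:C_k\to C_k$ fixing $B_k$. Surjectivity then forces $m,n\in\Aut(C_k)$, and (iii) gives $m=n=\mathbf{1}_{C_k}$.

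Note also a slip in your write-up: the rigidity from (iii) is applied to the \emph{target} one-point extension $C_k$ over $B_k$, not to the source $C_i$ over $B_i$ as you wrote. Once these points are made precise your argument is exactly the paper's.
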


\begin{proof}
We start by considering an arbitrary $\varphi\in\End(A)$. Assume that $A^\star$ is obtained as a free $\C$-sum of the
rooted multi-amalgam $(A,(B_i,C_i)_{i\in I})$. Our first task here is to specify homomorphisms
$\psi_i^{(\varphi)}:C_i\to A^\star$, $i<\omega$, agreeing with $\varphi$ on each $B_i$. For convenience, write
$\varphi_i=\varphi|_{B_i}$. Then there is an index $j<\omega$ such that $\varphi(B_i)=B_j$. By (ii) we have the pushout
square
$$
\xymatrix{C_i \ar@{->>}[r]^{\xi_i^{(\varphi)}} & C'_i \\
B_i \ar@{->>}[r]_{\varphi_i} \ar@{^{(}->}[u]_\subseteq & B_j \ar@{^{(}->}[u]_\subseteq }
$$
 However, $B_j$ has come with the accompanying
$C_j$ such that the extension $(B_j, C_j)$ is of the same isomorphism type as $(B_j, C'_i)$. Therefore, there is an
isomorphism $\iota_i^{(\varphi)} : C'_i \to C_j$ such that
$$
\xymatrix{C_i \ar@{->>}[r]^{\xi_i^{(\varphi)}} & C'_i \ar[r]^{\iota_i^{(\varphi)}} & C_j\\
B_i \ar@{->>}[rr]_{\varphi_i} \ar@{^{(}->}[u]_\subseteq & & B_j \ar@{^{(}->}[ul]_\subseteq \ar@{^{(}->}[u]_\subseteq }
$$
Since $B_i B_j C_i C'_i$ is a pushout square and $C_j$ is isomorphic to $C'_i$, it follows that $B_i B_j C_i C_j$ is
also a pushout square. Now we define $\psi_i^{(\varphi)}=\iota_i^{(\varphi)}\xi_i^{(\varphi)}$; therefore, we have a
family of homomorphisms
$$
(\varphi,(\psi_i^{(\varphi)})_{i<\omega})
$$
defined on the components of the rooted multi-amalgam in $\C$ whose sum is $A^\star$. By the properties of the free
$\C$-sum, this family extends to a unique endomorphism $\wh\varphi$ of $A^\star$. We claim that the assignment
$\varphi\mapsto\wh\varphi$ is a monoid embedding $\End(A)\to\End(A^\star)$. Since $\wh\varphi|_A=\varphi$ the latter
assignment is injective, and since $(\mathbf{1}_A,(\mathbf{1}_{C_i})_{i<\omega})$ yields
$\wh{\mathbf{1}_A}=\mathbf{1}_{A^\star}$, we need to prove that for any two $\varphi,\varphi'\in\End(A)$ we have
$$\wh{\varphi'\varphi}=\wh{\varphi'}\wh\varphi.$$

To this end, let $\varphi'$ be another endomorphism of $A$, $\Phi=\varphi'\varphi$ and $\varphi'_i=\varphi'|_{B_i}$,
$\Phi_i=\Phi|_{B_i}$ for any $i<\omega$. We still assume that the index $j$ is selected for a given arbitrary $i$ such
that $\varphi(B_i)=B_j$. So, let $\phi'_j(B_j) = B_k$. By the definition of $\wh{\varphi'}$ there exist a surjective
homomorphism $\xi_j^{(\varphi')} : C_j \to C'_j$ and an isomorphism $\iota_j^{(\varphi')} : C'_j \to C_k$, where $(B_k,
C_k)$ is the representative of the isomorphism type of $(B_k, C'_j)$, such that $B_j B_k C_j C_k$ in the following
diagram
$$
\xymatrix{C_i \ar@{->>}[r]^{\xi_i^{(\varphi)}} & C'_i \ar[r]^{\iota_i^{(\varphi)}} & C_j
\ar@{->>}[r]^{\xi_j^{(\varphi')}} & C'_j \ar[r]^{\iota_j^{(\varphi')}} & C_k\\
B_i \ar@{->>}[rr]_{\phi_i} \ar@{^{(}->}[u]_\subseteq & & B_j \ar@{^{(}->}[ul]_\subseteq \ar@{^{(}->}[u]_\subseteq
\ar@{->>}[rr]_{\phi'_j} & & B_k \ar@{^{(}->}[ul]_\subseteq \ar@{^{(}->}[u]_\subseteq }
$$
is a pushout square. Then, by the properties of pushouts, $B_i B_k C_i C_k$ is a pushout square. On the other hand, we
can repeat the procedure in a slightly different setting to get the following pushout square
$$
\xymatrix{C_i \ar@{->>}[r]^{\xi_i^{(\varphi'\varphi)}} & C''_i \ar[r]^{\iota_i^{(\varphi'\varphi)}} & C_k\\
B_i \ar@{->>}[rr]_{\Phi_i = \varphi'_j \varphi_i} \ar@{^{(}->}[u]_\subseteq & & B_k \ar@{^{(}->}[ul]_\subseteq
\ar@{^{(}->}[u]_\subseteq }
$$
Therefore, there exist homomorphisms $m, n : C_k \to C_k$ such that the diagram below commutes (both inner and the
outer square are pushout squares):
$$\xymatrix{& & & C_k \ar@<0.5ex>[dl]^n \\
C_i \ar@{->>}[rr]^{\psi_i^{(\varphi'\varphi)}} \ar@{->>}@/^4ex/[urrr]^{\psi_j^{(\varphi')}
\psi_i^{(\varphi)}} & & C_k \ar@<0.5ex>[ur]^m & \\
B_i \ar@{->>}[rr]_{\Phi_i} \ar@{^{(}->}[u]_\subseteq & & B_k \ar@{^{(}->}[u]_\subseteq
\ar@{^{(}->}@/_2ex/[ruu]_\subseteq & }
$$
In particular,
\begin{gather*}
m\psi_i^{(\varphi'\varphi)} = \psi_j^{(\varphi')}\psi_i^{(\varphi)},\\
n(\psi_j^{(\varphi')}\psi_i^{(\varphi)}) = \psi_i^{(\varphi'\varphi)}.
\end{gather*}
Then
\begin{gather*}
nm\psi_i^{(\varphi'\varphi)} = n(\psi_j^{(\varphi')}\psi_i^{(\varphi)})=\psi_i^{(\varphi'\varphi)},\\
mn(\psi_j^{(\varphi')}\psi_i^{(\varphi)}) = m\psi_i^{(\varphi'\varphi)}=\psi_j^{(\varphi')}\psi_i^{(\varphi)}.
\end{gather*}
Since both $\psi_j^{(\varphi')}\psi_i^{(\varphi)}$ and $\psi_i^{(\varphi'\varphi)}$ are surjective, it follows that
$mn=nm=\mathbf{1}_{C_k}$, so $m$ and $n$ are mutually inverse automorphisms of $C_k$ fixing $B_k$ pointwise. The
requirement (iii) now yields that $m = n = \mathbf{1}_{C_k}$, whence
$$
\psi_j^{(\varphi')}\psi_i^{(\varphi)} = \psi_i^{(\varphi'\varphi)}.
$$
This completes the proof that $\wh{\varphi'\varphi} = \wh{\varphi'}\wh\varphi$.
\end{proof}

We say that a one-point extension $C$ of $B$ is \emph{uniquely generated} if for any $x,x'\in C\setminus B$ such that
$C=\langle B\cup\{x\}\rangle = \langle B\cup\{x'\}\rangle$ we have $x=x'$. We focus our interest on \Fr classes with
this property, since it is always present in (but, as we shall see, not confined to) the case when the signature of
$\C$ contains no operation symbols (in particular, in all relational structures): the unique generator is the single
element in $C\setminus B$. Notice that the uniquely generated one-point extensions instantly satisfy condition (iii)
from the previous theorem, so that it may be dropped altogether e.g.\ for classes of relational structures.

\begin{corollary}\label{ug}
Let $\C$ be a \Fr class satisfying the condition of uniquely generated one-point extensions. If $\C$ satisfies the
strict AP and the strict 1PHEP, then for any $A\in\ol\C$, $\End(A)$ embeds into $\End(A^\star)$ and so into the
endomorphism monoid of $\Fl(\C)$.
\end{corollary}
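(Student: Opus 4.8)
The plan is to derive Corollary~\ref{ug} directly from Theorem~\ref{main}. Hypotheses (i) and (ii) of the theorem are assumed outright in the corollary, so the only thing that needs to be checked is that the condition of uniquely generated one-point extensions implies hypothesis (iii): whenever $C\in\C$ is a one-point extension of $B\in\C$, the pointwise stabilizer of $B$ in $\Aut(C)$ is trivial.

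To verify this I would fix such $B\subseteq C$, choose $x\in C\setminus B$ with $C=\langle B\cup\{x\}\rangle$, and take an arbitrary $\alpha\in\Aut(C)$ with $\alpha|_B=\mathbf{1}_B$. Since $\alpha$ is a bijection of $C$ fixing $B$ pointwise, it maps $C\setminus B$ bijectively onto itself; in particular $x':=\alpha(x)\in C\setminus B$. Applying $\alpha$ to the generating set $B\cup\{x\}$ and using $\alpha(B)=B$ gives $C=\alpha(C)=\langle\alpha(B)\cup\{\alpha(x)\}\rangle=\langle B\cup\{x'\}\rangle$. Thus $x,x'\in C\setminus B$ and $C=\langle B\cup\{x\}\rangle=\langle B\cup\{x'\}\rangle$, so unique generation forces $x'=x$, i.e.\ $\alpha(x)=x$. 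Hence $\alpha$ agrees with $\mathbf{1}_C$ on the generating set $B\cup\{x\}$, and since an endomorphism of a structure is determined by its restriction to a generating set, $\alpha=\mathbf{1}_C$. This establishes (iii).

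With (i), (ii) and (iii) all in hand, Theorem~\ref{main} applies verbatim: for every $A\in\ol\C$ there is an embedding $\End(A)\to\End(A^\star)$, and composing the resulting chain $\End(A)\to\End(A^{(1)})\to\End(A^{(2)})\to\cdots$ of embeddings (or simply invoking the ``consequently'' clause of Theorem~\ref{main} together with Proposition~\ref{flim2}, which identifies the iterated construction with $\Fl(\C)$) yields an embedding of $\End(A)$ into $\End(\Fl(\C))$. I do not anticipate a genuine obstacle here; the only step requiring a moment of care is the observation that $\alpha$ cannot carry $x$ into $B$, which is immediate once one notes that an automorphism fixing $B$ pointwise permutes $C\setminus B$.
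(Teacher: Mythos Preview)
Your proposal is correct and matches the paper's own reasoning: the paper does not give a formal proof of the corollary but simply remarks, just before stating it, that uniquely generated one-point extensions ``instantly satisfy condition (iii)'' of Theorem~\ref{main}, which is precisely what you have spelled out in detail. Your verification that an automorphism fixing $B$ pointwise must permute $C\setminus B$, send the relative generator $x$ to another relative generator, and hence fix $x$ by uniqueness, is exactly the intended (if unstated) argument.
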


\section{Applications}

\subsection{The cases of the random graph and the generic poset}

As noted in items 1 and 2 of Example \ref{examp}, the classes of finite simple graphs and finite posets enjoy the
strict AP and thus admit free sums of their rooted multi-amalgams. Of course, these classes have the property of
uniquely generated one-point extensions, thus we may use Corollary \ref{ug}. Our aim in this section is to briefly
reprove the main results of the notes \cite{BDD} and \cite{D} in order to show that they are transparent instances of
our general scheme.

\begin{lemma}\label{R-fun}
The class of finite simple graphs satisfies the strict 1PHEP.
\end{lemma}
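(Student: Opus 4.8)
The plan is to verify the strict 1PHEP for the class $\C$ of finite simple graphs directly from the explicit description of pushouts (free sums) in this category, as recorded in item~1 of Example~\ref{examp}. So I would start with a span
$$
\xymatrix{C & B \ar@{_{(}->}[l]_i \ar@{->>}[r]^f & B'}
$$
in $\C$ where $C$ is a one-point extension of $B$, say $C = \langle B \cup \{x\}\rangle$ with $x\notin B$ (here the vertex set of $C$ is $B \sqcup \{x\}$). The first step is simply to write down the candidate pushout $P$ and check it has the required properties. Since graphs form a relational class, the free $\C$-sum of a rooted amalgam is obtained by gluing along the common part and taking the union of edge sets; here the relevant span is an amalgam-like diagram except that $f$ is only a surjection, not an embedding. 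The natural candidate is: take the vertex set $B' \sqcup \{x\}$, keep all edges of $B'$, and add an edge $\{b', x\}$ whenever there is some $b \in f^{-1}(b')$ with $\{b,x\}$ an edge of $C$; equivalently, $P$ is the quotient of $C$ obtained by identifying vertices of $B$ that $f$ identifies (and relabelling via $f$), keeping $x$ untouched. Then $i' : B' \to P$ is the obvious inclusion and $f' : C \to P$ sends $B$ to $B'$ via $f$ and fixes $x$.

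The second step is to check the universal property. Given $Q \in \ol\C$ and homomorphisms $j_1 : B' \to Q$, $j_2 : C \to Q$ with $j_1 f = j_2 i$, I define $u : P \to Q$ by $u|_{B'} = j_1$ and $u(x) = j_2(x)$; one checks this is well-defined and a graph homomorphism precisely because $j_1 f = j_2 i$ guarantees compatibility on $B'$, and because every edge of $P$ incident to $x$ comes from an edge $\{b,x\}$ of $C$ with $j_2$ mapping it to an edge of $Q$, while $j_1(f(b)) = j_2(b)$. Uniqueness of $u$ is immediate since $P$ is generated by $i'(B') \cup f'(C)$. The third step is to observe that by construction $i'$ is an embedding (it is literally an inclusion of an induced subgraph, as $P$ induces on $B'$ exactly the graph $B'$) and $f'$ is surjective (its image contains $B'$ and $x$, hence is all of $P$). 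That is exactly what the strict 1PHEP demands. As a side remark, one notes $P$ is either a one-point extension of $B'$ (when $x$ remains outside $B'$) or equals $B'$ — the latter never happens here since $x \notin B'$, so $P$ is genuinely a one-point extension, consistent with the remark in the text.

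The only mildly delicate point — and the one I would be most careful about — is not an obstacle so much as a bookkeeping issue: making sure the pushout is taken \emph{with respect to $\ol\C$} and that $P$ really lands in $\C$ (it does, being finite), and checking that the edge relation on $P$ is well-defined when $f$ collapses several vertices of $B$ with differing adjacency to $x$. Here one must verify that no inconsistency arises: the definition ``$\{b',x\}$ is an edge iff some preimage $b$ of $b'$ is adjacent to $x$'' is unambiguous as stated, and since we are building a \emph{homomorphic} image rather than an isomorphic copy, there is no requirement that non-adjacency be preserved — so collapsing an $x$-neighbour with an $x$-non-neighbour simply makes the merged vertex an $x$-neighbour in $P$, which is fine. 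Once this is pinned down the verification of the pushout diagram is routine diagram-chasing, and the lemma follows.
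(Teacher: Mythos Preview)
Your argument is correct and essentially identical to the paper's own proof: both construct the pushout by adjoining a single new vertex to $B'$ whose neighbours are exactly the $f$-images of the neighbours of $x$ in $C$, then verify the universal property by the unique forced definition of the mediating map and a direct edge check. The only differences are notational (your $B,B',C,x$ versus the paper's $G,G',H,\vv$) and your more explicit discussion of why collapsing vertices with differing adjacency to $x$ causes no trouble, which the paper leaves implicit.
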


\begin{proof}
Let $G$ be a finite simple graph, $f:G\to G'$ a surjective graph homomorphism and $H=G\cup\{\vv\}$ a one-point
extension of $G$ (for convenience, we assume that the embedding $i$ in the definition of the strict 1PHEP is an
inclusion map). Let $N_G(\vv)\subseteq V(G)$ be the set of all vertices adjacent to $\vv$. Define $H'$ to be the graph
obtained from $G'$ by adjoining a new vertex $\vv'$ which is adjacent to all vertices from $\{f(\ww):\ \ww\in
N_G(\vv)\}$ and no other vertex from $V(G')$. Furthermore, let $f'$ be the homomorphism extending $f$ which sends $\vv$
to $\vv'$.

We claim that $H'$, along with $\mathbf{1}_{G'}$ and $f'$, is a pushout for the span $(G,G',H,f,\mathbf{1}_G)$. Indeed,
let $\Gamma$ be a graph such that there are homomorphisms $g:G'\to \Gamma$ and $h:H\to\Gamma$ with $gf=h|_G$. Now there
is only one function $u:V(H')\to V(\Gamma)$ such that $u|_{V(G')}=g$ and $uf'=h$: it is defined by
$u(\mathbf{x})=g(\mathbf{x})$ for $\mathbf{x}\in V(G')$ and $u(\vv')=h(\vv)$. Since $g$ is a graph homomorphism, and
since for any $\ww\in N_G(\vv)$ we have that $u(f'(\ww))=uf(\ww)=h(\ww)$ is adjacent to $h(\vv)=u(\vv')$ (as $h$ is a
homomorphism, too), it follows that $u$ is a graph homomorphism as well. Since $f'$ is surjective, the lemma is proved.
\end{proof}

\begin{corollary}[\cite{BDD}]\label{dm}
$\Self(\aleph_0)$ (and so any countable semigroup) embeds into $\End(R)$, the endomorphism monoid of the countably
infinite random graph.
\end{corollary}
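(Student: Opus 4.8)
The plan is to obtain the statement as an immediate application of Corollary~\ref{ug}. First I would check that the class $\C$ of all finite simple graphs satisfies the hypotheses of that corollary: it is a \Fr class with the strict AP by item~1 of Example~\ref{examp}, it enjoys the strict 1PHEP by Lemma~\ref{R-fun}, and, its signature being purely relational, each of its one-point extensions is trivially uniquely generated (the single element of $C\setminus B$ is the unique generator). Hence Corollary~\ref{ug} applies and tells us that for \emph{every} countable graph $A$ (equivalently, for every $A\in\ol\C$) there is an embedding $\End(A)\hookrightarrow\End(A^\star)$, and therefore an embedding $\End(A)\hookrightarrow\End(\Fl(\C))=\End(R)$, using $\Fl(\C)\cong R$ together with the iterated free-sum construction of Proposition~\ref{flim2}.

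The only thing left to supply is a suitable choice of root. I would take $A$ to be the countably infinite \emph{edgeless} graph, i.e.\ the graph on an $\aleph_0$-element vertex set with empty edge set. Its finite induced subgraphs are finite edgeless graphs, which obviously lie in $\C$, so $A\in\ol\C$. Since $A$ has no edges, the requirement that a map $V(A)\to V(A)$ preserve adjacency is vacuous, so \emph{every} self-map of $V(A)$ is an endomorphism of $A$; that is, $\End(A)=\Self(\aleph_0)$.

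Putting the two parts together yields an embedding $\Self(\aleph_0)=\End(A)\hookrightarrow\End(R)$. Finally, the semigroup analogue of Cayley's theorem --- every semigroup $S$ acts faithfully on $S$ with an identity adjoined, if necessary, by right translations --- shows that each countable semigroup embeds into $\Self(\aleph_0)$, and composing with the previous embedding gives the stated conclusion.

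I do not anticipate any genuine obstacle: the substantive work lies in Theorem~\ref{main} and in the verification of the strict 1PHEP for graphs (Lemma~\ref{R-fun}), both already available. The one elementary point to confirm is that the edgeless graph lies in $\ol\C$ and admits all self-maps as endomorphisms, which is immediate; everything else is bookkeeping with the embeddings furnished by Corollary~\ref{ug}.
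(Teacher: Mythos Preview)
Your proposal is correct and follows essentially the same approach as the paper: apply Corollary~\ref{ug} to the \Fr class of finite simple graphs (with the hypotheses verified via Example~\ref{examp}(1) and Lemma~\ref{R-fun}), choosing $A$ to be the countably infinite edgeless graph so that $\End(A)\cong\Self(\aleph_0)$. The paper's proof is just a terse one-line version of exactly this argument.
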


\begin{proof}
In Corollary \ref{ug} choose $A$ to be the countably infinite graph with no edges, since then $\End(A)\cong
\Self(\aleph_0)$.
\end{proof}

\begin{lemma}\label{P-fun}
The class of finite posets satisfies the strict 1PHEP.
\end{lemma}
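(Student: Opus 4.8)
The plan is to mimic the proof of Lemma~\ref{R-fun}, but with a genuine complication that the order relation forces upon us. Assume (as there) that the embedding $i$ in the span is an inclusion, so $C=B\cup\{\vv\}$ for a single new point $\vv$, and put $D=\{b\in B:\ b<_C\vv\}$ and $U=\{b\in B:\ \vv<_C b\}$. Since $C$ is a poset, $D$ is a down-set of $B$, $U$ is an up-set of $B$, and $d<_B u$ whenever $d\in D$, $u\in U$. The natural candidate for the pushout is the structure $P$ on the set $B'\cup\{\vv'\}$ ($\vv'$ a new element) that keeps $\le_{B'}$ on $B'$ and puts $p<_P\vv'$ iff $p\le_{B'}f(d)$ for some $d\in D$, and $\vv'<_P q$ iff $f(u)\le_{B'}q$ for some $u\in U$. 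A routine check shows this relation is already transitive; the step I expect to be the main obstacle is antisymmetry. The only way a cycle through $\vv'$ can occur is if $f(u_0)\le_{B'}p\le_{B'}f(d_0)$ for some $d_0\in D$, $u_0\in U$, which (since $d_0<_B u_0$ forces $f(d_0)\le_{B'}f(u_0)$) means $f(d_0)=f(u_0)$; and such a collision really can happen, because the homomorphism $f$ may collapse the comparable pair $d_0<_B u_0$ to a point. So I would split into two cases.

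\emph{Case 1: $f(d)\ne f(u)$ for all $d\in D$, $u\in U$.} By the observation above, $P$ is then a genuine finite poset. Let $i':B'\hookrightarrow P$ be the inclusion (a full, hence honest, embedding of posets) and let $f':C\to P$ extend $f$ by $f'(\vv)=\vv'$; by construction $f'$ is an order-preserving surjection. For the universal property, take any $Q\in\ol\C$ with homomorphisms $g:B'\to Q$ and $h:C\to Q$ satisfying $gf=h|_B$. The only candidate for the mediating map is $u:P\to Q$ with $u|_{B'}=g$ and $u(\vv')=h(\vv)$; it is order-preserving because $p<_P\vv'$ gives some $d\in D$ with $p\le_{B'}f(d)$, whence $u(p)=g(p)\le_Q g(f(d))=h(d)\le_Q h(\vv)=u(\vv')$, and symmetrically $\vv'<_P q$ yields $u(\vv')=h(\vv)\le_Q h(u)=g(f(u))\le_Q g(q)=u(q)$. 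Thus $P=\coprod$-type pushout exists, lies in $\C$, and is a one-point extension of $B'$.

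\emph{Case 2: $f(d_0)=f(u_0)=:p_0$ for some $d_0\in D$, $u_0\in U$.} First, $p_0$ is the unique such common value: if also $f(d_1)=f(u_1)=p_1$, then $d_0<_B u_1$ and $d_1<_B u_0$ give $p_0\le_{B'}p_1$ and $p_1\le_{B'}p_0$, so $p_0=p_1$. Here I would take $P=B'$, $i'=\mathbf{1}_{B'}$, and let $f':C\to B'$ extend $f$ by $f'(\vv)=p_0$; using $d<_B u_0$ for $d\in D$ and $d_0<_B u$ for $u\in U$, one checks $f'$ is an order-preserving surjection. For the universal property, given $g:B'\to Q$ and $h:C\to Q$ with $gf=h|_B$, we have $h(d_0)\le_Q h(\vv)\le_Q h(u_0)$ while $h(d_0)=g(p_0)=h(u_0)$, so antisymmetry in $Q$ forces $h(\vv)=g(p_0)$; hence $g$ itself is the unique mediating homomorphism, since $gf'|_B=gf=h|_B$ and $gf'(\vv)=g(p_0)=h(\vv)$.

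In both cases the pushout of the given span exists in $\ol\C$, the object $P$ is a finite poset (a one-point extension of $B'$ in Case~1, equal to $B'$ in Case~2), the comparison map $i'$ is an embedding and $f'$ is surjective. This is exactly the assertion that the class of finite posets enjoys the strict 1PHEP.
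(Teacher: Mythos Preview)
Your proof is correct and follows essentially the same two-case strategy as the paper: split according to whether $f(D)\cap f(U)$ is empty, adjoin a new top/bottom-bounded point $\vv'$ in the first case, and collapse $\vv$ onto the unique common value $p_0$ in the second, then verify the universal property. Your formulation is in fact slightly more careful than the paper's: by declaring $p<_P\vv'$ whenever $p$ lies in the down-closure of $f(D)$ (rather than only when $p\in f(D)$), you ensure transitivity of the new order directly, whereas the paper tacitly treats $f(L)$ as a down-set.
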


\begin{proof}
Let $P$ be a finite poset, $f:P\to P'$ a surjective poset homomorphism (order-preserving map) and $Q=P\cup\{\xx\}$ a
one-point extension of $P$. In order to define the poset $Q'$, let
$$
L=\{p\in P:\ p<\xx\}\mbox{\quad and\quad}U=\{p\in P:\ \xx<p\}.
$$
Clearly, $\ell<u$ for any $\ell\in L$ and $u\in U$; therefore $f(\ell)\leq f(u)$ holds in $P'$. We distinguish two
cases. If $f(L)\cap f(U)=\es$, then we define $Q'$ to be the poset obtained from $P'$ by inserting a new element $\xx'$
`between' the down-set $f(L)$ and up-set $f(U)$: we have $y<\xx'$ if and only if $y\in f(L)$ and $\xx'<y$ if and only
if $y\in f(U)$ (the other elements of $P'$ are set to be incomparable to $\xx'$). Then, the homomorphism $f':Q\to Q'$
is defined as the extension of $f$ obtained by sending $\xx\mapsto\xx'$. However, if $f(L)\cap f(U)\neq\es$, then for
any $y\in f(L)\cap f(U)$, $\ell'\in f(L)$ and $u'\in f(U)$ we have $\ell'\leq y\leq u'$, implying that such $y$ must be
unique: we have $f(L)\cap f(U)=\{y_0\}$ for some $y_0\in P'$. Then we define $Q'=P'$ and $f':Q\to P'$ is an extension
of $f$ determined by $f(\xx)=y_0$. In both cases $f'$ is surjective.

We claim that $Q'$ is the pushout of the span $(P,P'Q,f,\mathbf{1}_P)$, together with the poset homomorphisms
(order-preserving maps) $\mathbf{1}_{P'}$ and $f'$; that is, we claim that there is a unique order-preserving map
$m:Q'\to X$ making the following diagram commutative:
$$
\xymatrix{&& X\\
Q \ar@{->>}[r]^{f'} \ar@/^2ex/[urr]^{h} & Q' \ar[ru]^m & \\
P \ar@{->>}[r]^f \ar@{^{(}->}[u]^\subseteq & P' \ar@{^{(}->}[u]_\subseteq \ar@/_2ex/[ruu]_{g} & }
$$
where $X$ is a poset, while $g:P'\to X$ and $h:Q\to X$ are order-preserving functions such that $gf=h|_P$. As in the
case of graphs, there is a single function $m:Q'\to X$ that makes the above diagram commutative, and it is defined by
$m(y)=g(y)$ for all $y\in P'$ and, if $\xx'$ exists, $m(\xx')=h(\xx)$. Thus it remains to verify that $m$ is
order-preserving on $Q'$. This is clear if $Q'=P'$ since then $m=g$; hence, we may assume that $f(L)\cap f(U)=\es$ and
$Q'=P'\cup\{\xx'\}$. If $y\in P'$ is such that $y<\xx'$, then $y\in f(L)$, so $m(y)=g(y)=gf(\ell)=h(\ell)$ for some
$\ell\in L$. Since $\ell<\xx$ and $h$ is order-preserving on $Q$, we have $m(y)=h(\ell)\leq h(\xx)=m(\xx')$. The
conclusion that $\xx'<z$ for $z\in P'$ implies $m(\xx')\leq m(z)$ follows in a dual fashion. Finally, $m$ preserves the
order on the remaining pairs of distinct elements of $Q'$ as it agrees with $g$ on such pairs; therefore, $m$ is a
poset homomorphism.
\end{proof}

\begin{corollary}[\cite{D}]\label{au}
$\Self(\aleph_0)$ (and so any countable semigroup) embeds into $\End(\mathbb{P})$, the endomorphism monoid of the
generic poset.
\end{corollary}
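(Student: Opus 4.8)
The plan is to read the statement off from Corollary \ref{ug}, in exactly the same way that Corollary \ref{dm} was obtained for the random graph. First I would check that the class $\C$ of all finite posets is a \Fr class meeting the three hypotheses packaged into that corollary: it enjoys the strict AP by item 2 of Example \ref{examp}; being a class of purely relational structures it automatically has the property of uniquely generated one-point extensions (the single element of $C\setminus B$ is the required unique generator), so condition (iii) of Theorem \ref{main} is vacuous; and it enjoys the strict 1PHEP by Lemma \ref{P-fun}. Consequently, for every $A\in\ol\C$ there is a chain of monoid embeddings $\End(A)\hookrightarrow\End(A^\star)\hookrightarrow\End(\Fl(\C))=\End(\mathbb{P})$.

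Next I would make the choice of $A$ that makes $\End(A)$ as large as possible, namely the countably infinite antichain $\mathbb{A}$: the poset on a countably infinite underlying set whose partial order is just equality. Every finite subposet of $\mathbb{A}$ is a finite antichain and hence lies in $\C$, so $\mathbb{A}\in\ol\C$; and every self-map of its underlying set vacuously preserves the (empty) strict order, so $\End(\mathbb{A})=\Self(\aleph_0)$. Feeding this into the embedding displayed above produces a copy of $\Self(\aleph_0)$ inside $\End(\mathbb{P})$.

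Finally I would invoke the semigroup analogue of Cayley's theorem: the right regular representation (after adjoining an identity if needed) embeds any countable semigroup $S$ into $\Self(\aleph_0)$, hence into $\End(\mathbb{P})$, which is the asserted countable universality. I do not expect any genuine obstacle here: all the substantive work has been absorbed into Lemma \ref{P-fun} (which verifies the strict 1PHEP for finite posets) together with Corollary \ref{ug}, and the only facts to be checked directly — that the countably infinite antichain belongs to $\ol\C$ and that its endomorphism monoid is literally $\Self(\aleph_0)$ — are immediate. The proof is therefore a one-line specialization of the general machinery, entirely parallel to Corollary \ref{dm}.
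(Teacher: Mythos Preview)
Your proposal is correct and follows exactly the paper's own proof: apply Corollary \ref{ug} with $A$ chosen to be the countably infinite antichain, whose endomorphism monoid is $\Self(\aleph_0)$. The additional verifications you spell out (strict AP via Example \ref{examp}, uniquely generated one-point extensions for relational structures, strict 1PHEP via Lemma \ref{P-fun}, and Cayley's theorem) are precisely the ingredients the paper has assembled in the surrounding text.
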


\begin{proof}
In Corollary \ref{ug} choose $A$ to be the countably infinite anti-chain.
\end{proof}

Actually, in \cite{D} strict endomorphisms of $\mathbb{P}$ were used in order to obtain a more specific version of the
above result, thus avoiding the case when $Q'=P'$ in the above lemma. Still, even the strict endomorphisms were
sufficient to accomplish the desired embedding of $\Self(\aleph_0)$ into $\End(\mathbb{P})$.

\subsection{The case of the (rational) Urysohn space}

The traditional way to work with metric spaces is to consider them as pairs $(X,d_X)$ consisting of a non-empty set of
points $X$ and a distance function (a \emph{metric}) $d_X:X\times X\to\mathbb{R}_0^+$ satisfying the triangle
inequality. However, there is an easy trick to convert them into first-order relational structures by defining, for
each $\alpha\in\mathbb{R}$, $\alpha>0$, a binary relation $R_\alpha$ on $X$ so that $(x,y)\in R_\alpha$ if and only if
$d(x,y)<\alpha$. Furthermore, we may restrict the set of possible distances between points to some countable subset of
$\mathbb{R}_0^+$, thus obtaining structures over a countable signature; for example, the set $\mathbb{Q}_0^+$ of
non-negative rationals will serve this purpose.

In a posthumous paper \cite{Ur} (probably the last contribution before his utterly tragic and untimely death), P.\ S.\
Urysohn showed that up to isometry (a distance preserving bijection) there is a unique complete separable metric space
$\mathbb{U}$ which is ultrahomogeneous and universal, i.e.\ which embeds all separable metric spaces. Today,
$\mathbb{U}$ is called the \emph{Urysohn space} (see, e.g., \cite{HN,Ka,Pe,Us2,Ve} for further background). There is
one construction of $\mathbb{U}$ which will be of a particular interest for us. Namely, let $\C$ be the class of all
finite metric spaces with rational distances; it is easily verified that $\C$ is a \Fr class. Therefore, this class has
a \Fr limit, which is denoted by $\mathbb{U}_\mathbb{Q}$ and called the \emph{rational Urysohn space}. This space is
not complete, but its completion $\ol{\mathbb{U}_\mathbb{Q}}$ has all the defining properties of the Urysohn space, so
$\ol{\mathbb{U}_\mathbb{Q}}$ must be isometric to $\mathbb{U}$ by Urysohn's result.

It was proved in \cite[Lemma 3.5]{D-Berg} that the class of finite metric spaces has the 1PHEP---and the same is true
if we confine ourselves only to spaces with rational distances---implying that both $\mathbb{U}_\mathbb{Q}$ and
$\mathbb{U}$ are homomorphism-homogeneous. Here the notion of a \emph{homomorphism} of metric spaces follows from its
first-order-structure representation: for metric spaces $X,Y$, a function $f:X\to Y$ is a homomorphism if it is
\emph{non-expanding}, that is,
$$d_Y(f(x_1),f(x_2))\leq d_X(x_1,x_2)$$
holds for all $x_1,x_2\in X$. In other words, we have the $1$-Lipschitz mappings.

Here we prove a somewhat stronger result.

\begin{lemma}\label{ms-fun}
Let $\Sigma$ be an additive submonoid of $\mathbb{R}_0^+$. The class $\mathcal{M}_\Sigma$ of all finite metric spaces
with distances in $\Sigma$ enjoys the strict 1PHEP.
\end{lemma}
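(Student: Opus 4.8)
The plan is to mimic the construction used for graphs and posets in Lemmata~\ref{R-fun} and~\ref{P-fun}, but now the one-point extension must respect both the triangle inequality and the restriction of distances to~$\Sigma$. So suppose $X\in\M_\Sigma$, let $f\colon X\to X'$ be a surjective non-expanding map with $X'\in\M_\Sigma$, and let $Y=X\cup\{\vv\}$ be a one-point extension of $X$ (as usual I take the embedding $i$ to be an inclusion). The natural candidate for the pushout is to adjoin a new point $\vv'$ to $X'$ and declare its distance to each $y\in X'$ to be
$$
d_{Y'}(\vv',y)=\min_{x\in X,\ f(x)=y} d_Y(\vv,x),
$$
i.e.\ the smallest of the distances from $\vv$ to the $f$-preimages of $y$. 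First I would check that this is a legitimate element of $\M_\Sigma$: the values lie in $\Sigma$ since each $d_Y(\vv,x)\in\Sigma$ and $\Sigma$ is closed under taking the minima of finite subsets (being a subset of $\mathbb{R}_0^+$, any nonempty finite subset has a least element, which is again in $\Sigma$). The only genuinely nontrivial point is the triangle inequality involving $\vv'$: for $y,z\in X'$ one needs
$$
|d_{Y'}(\vv',y)-d_{Y'}(\vv',z)|\leq d_{X'}(y,z)\leq d_{Y'}(\vv',y)+d_{Y'}(\vv',z).
$$
This is where I expect the main (though still routine) work to be: pick preimages $x,x'\in X$ realizing the two minima, and use that $f$ is non-expanding ($d_{X'}(y,z)\leq d_X(x,x')$) together with the triangle inequality for $\vv$ in $Y$ to get the right-hand inequality; for the left-hand inequality, given say $d_{Y'}(\vv',y)$ realized by $x$, combine $d_Y(\vv,x)\leq d_Y(\vv,x'')+d_X(x'',x')$ for any preimage $x''$ of $z$ and choose $x''$ optimally. (If $X'$ already contains a point at the "right" distances the extension may degenerate onto $X'$, exactly as in the poset case; this is allowed by the statement of the strict 1PHEP.) I would then set $f'\colon Y\to Y'$ to be $f$ on $X$ and $\vv\mapsto\vv'$, noting $f'$ is surjective and non-expanding (the latter precisely because we took the \emph{minimum} in the definition of $d_{Y'}(\vv',-)$).

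It remains to verify the universal property. Let $Z\in\ol{\M_\Sigma}$ with non-expanding maps $g\colon X'\to Z$ and $h\colon Y\to Z$ satisfying $gf=h|_X$. As in the graph and poset arguments, there is exactly one set-map $u\colon Y'\to Z$ with $u|_{X'}=g$ and $uf'=h$, namely $u(y)=g(y)$ for $y\in X'$ and $u(\vv')=h(\vv)$. The pending check is that $u$ is non-expanding. On pairs inside $X'$ this is just that $g$ is non-expanding; so the one thing to show is $d_Z(u(\vv'),g(y))\leq d_{Y'}(\vv',y)$ for each $y\in X'$. Choosing $x\in X$ with $f(x)=y$ attaining the minimum, we get $d_Z(h(\vv),g(y))=d_Z(h(\vv),g(f(x)))=d_Z(h(\vv),h(x))\leq d_Y(\vv,x)=d_{Y'}(\vv',y)$, using $h$ non-expanding and $gf=h|_X$. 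Together with the already established fact that $f'$ is surjective, this shows that $Y'$ with $\mathbf 1_{X'}$ and $f'$ is the $\ol{\M_\Sigma}$-pushout of the span $(X,X',Y,f,\mathbf 1_X)$ witnessing the strict 1PHEP, completing the proof.
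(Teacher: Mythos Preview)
Your proposed metric on $Y'=X'\cup\{\vv'\}$ does not, in general, satisfy the triangle inequality, so the construction breaks down. Concretely, take $X=\{a,b\}$ with $d_X(a,b)=10$, $X'=\{p,q\}$ with $d_{X'}(p,q)=1$, and $f(a)=p$, $f(b)=q$ (non-expanding since $1\leq 10$). Let $Y=X\cup\{\vv\}$ with $d_Y(\vv,a)=1$ and $d_Y(\vv,b)=10$. Your formula gives $d_{Y'}(\vv',p)=1$ and $d_{Y'}(\vv',q)=10$, and then the inequality $d_{Y'}(\vv',q)\leq d_{Y'}(\vv',p)+d_{X'}(p,q)$ reads $10\leq 2$, which fails. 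The sketch you give for the ``left-hand inequality'' cannot be completed: to bound $d_{Y'}(\vv',y)$ by $d_{Y'}(\vv',z)+d_{X'}(y,z)$ you would need a preimage of $y$ close to $\vv$, but $f$ may collapse distances in $X$ so drastically that no such preimage exists.

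The correct definition, which is the one used in the paper, is
\[
d_{Y'}(\vv',y)=\min\{\,d_Y(\vv,w)+d_{X'}(f(w),y):w\in X\,\},
\]
allowing an intermediate step through \emph{any} $w\in X$, not just preimages of $y$. This is precisely the largest function compatible both with $f'$ being non-expanding and with the triangle inequality in $Y'$, and one checks directly that it \emph{is} a metric. Note that here the values still lie in $\Sigma$ because $\Sigma$ is closed under addition (this is where the submonoid hypothesis is actually used; closure under finite minima alone, which is all your formula requires, would not suffice). With this corrected distance the verification that $f'$ is surjective and non-expanding, and the universal-property argument you outline, go through essentially as you wrote.
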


\begin{proof}
Let $X$ be a finite metric space and $Y = X \cup \{x\st\}$ its one-point extension. Let $f : X \to X'$ be a surjective
non-expanding map. Let $P = X' \cup \{y^*\}$ where $y^* \not\in X'$ is a new point. Let us define a metric on $P$ as
follows: for $x, x' \in X'$ let $d_P(x, x') = d_{X'}(x, x')$, while
$$
d_P(y^*, x) = \min\{d_Y(x^*, w) + d_{X'}(f(w), x) : w \in X\}.
$$
It is easy to verify that $d_P$ is indeed a metric on $P$. Note that since all the distances in $X'$ and $Y$ are from
$\Sigma$, so are all distances in $P$. The mapping $g : Y \to P$ defined by $g(x) = f(x)$ for $x \in X$ and $g(x^*) =
y^*$ is clearly a surjective homomorphism and the diagram
$$
\xymatrix{Y \ar@{->>}[r]^g & P \\
X \ar@{->>}[r]^f \ar@{^{(}->}[u]^{\subseteq} & X' \ar@{^{(}->}[u]_{\subseteq} }
$$
commutes. Let us show that this is a pushout square (in $\mathcal{M}_\Sigma$, but actually it will be a pushout square
in the category of all metric spaces). Let $Z$ be a finite metric space (with distances from $\Sigma$), and let $\mu: Y
\to Z$ and $\nu : X' \to Z$ be homomorphisms such that $\mu|_X = \nu f$:
$$
\xymatrix{& & Z \\
Y \ar@{->>}[r]^g \ar@/^2ex/[urr]^\mu & P \ar[ur]^u & \\
X \ar@{->>}[r]^f \ar@{^{(}->}[u]^{\subseteq} & X' \ar@{^{(}->}[u]_{\subseteq} \ar@/_2ex/[ruu]_\nu & }
$$
Then there exists exactly one mapping $u : P \to Z$ such that the diagram above commutes, namely, the one defined by:
\begin{align*}
u(x) &= \nu(x), \quad (x \in X'),\\
u(y^*) &= \mu(x^*).
\end{align*}
Let us show that $u$ is non-expanding, i.e.\ a homomorphism of metric spaces. Let $x, x' \in X'$. Then $d_Z(u(x),
u(x')) = d_Z(\nu(x), \nu(x')) \leq d_{X'}(x, x') = d_P(x, x')$, since $\nu$ is a homomorphism. To show $d_Z(u(x),
u(y^*)) \leq d_P(x, y^*)$ note that there exists a $w \in X$ such that $d_P(y^*, x) = d_Y(x^*, w) + d_{X'}(f(w), x)$.
Hence,
\begin{align*}
d_P(y^*, x) &= d_Y(x^*, w) + d_{X'}(f(w), x) & \\
            &= d_Z(\mu(x^*), \mu(w)) + d_Z(\nu(f(w)), \nu(x)) & \makebox[75pt][l]{[$\mu$ and $\nu$ are hom's]} \\
            &= d_Z(\mu(x^*), \mu(w)) + d_Z(\mu(w), \nu(x)) & \makebox[75pt][l]{[$\nu f = \mu|_X$]} \\
            &\leq d_Z(\mu(x^*), \nu(x)) & \makebox[75pt][l]{[triangle inequality]} \\
            &= d_Z(u(y^*), u(x)), & \makebox[75pt][l]{[definition of $u$]}
\end{align*}
as required.
\end{proof}

Metric spaces do not admit coproducts---for example, the existence of a hypothetical coproduct of a metric space $X$
and a trivial space would result in a one-point extension of $X$ by a `farthest' possible point, whereas the values
appearing in the vector of distances from the new point to the old ones are obviously not bounded. However, the free
\emph{amalgamated} sum of an amalgam of two spaces exists provided their nonempty intersection is finite.

\begin{lemma}\label{ms-fs}
For any additive submonoid $\Sigma$ of $\mathbb{R}_0^+$, the class $\mathcal{M}_\Sigma$ has the strict AP.
\end{lemma}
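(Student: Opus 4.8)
The plan is to build the required pushout as the classical \emph{metric amalgam}: equip the set-theoretic union of $A$ and $C$ over their common part with the shortest-path metric. First I would reduce, exactly as in the proof of Lemma~\ref{sAP}, to the situation where $f$ and $g$ are inclusions, so that $B$ is a finite common substructure of $A$ and $C$ with $A\cap C=B$. Then I set $P=A\cup C$ and define $d_P$ to agree with $d_A$ on $A$, with $d_C$ on $C$, and, for $a\in A\setminus B$ and $c\in C\setminus B$, to be
$$
d_P(a,c)=\min_{b\in B}\bigl(d_A(a,b)+d_C(b,c)\bigr),
$$
the minimum being attained because $B$ is finite. Since $\Sigma$ is closed under addition and a finite minimum of elements of $\Sigma$ is again an element of $\Sigma$, every distance of $P$ lies in $\Sigma$; and $P$ is finite, so $P\in\mathcal{M}_\Sigma$. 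The inclusions $i_1:A\hookrightarrow P$ and $i_2:C\hookrightarrow P$ are then isometric embeddings with $i_1|_B=i_2|_B$.

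Next I would verify that $d_P$ is genuinely a metric. Symmetry and vanishing on the diagonal are clear; if $d_P(a,c)=0$ with $a\in A\setminus B$ and $c\in C\setminus B$, then some $b\in B$ satisfies $d_A(a,b)=d_C(b,c)=0$, forcing $a=b=c$, which is impossible, so $d_P$ separates points. For the triangle inequality I would argue by cases on how the three points split among $A\setminus B$, $C\setminus B$ and $B$: if all three lie in $A$, or all three in $C$, there is nothing to do, and in each of the remaining cases one expands the relevant $d_P$-value(s) as $d_A(\,\cdot\,,b)+d_C(b,\,\cdot\,)$ for a witnessing $b\in B$ and concludes from the triangle inequalities inside $A$ and inside $C$ together with the compatibility $d_A|_B=d_B=d_C|_B$. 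This bookkeeping is the one mildly laborious part of the argument, but it is entirely routine; everything else is forced.

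Finally I would check the universal property. I claim the square is a pushout not merely in $\ol{\mathcal{M}_\Sigma}$ but in the category of all metric spaces, which is more than enough. So let $Z$ be a metric space and let $\mu:A\to Z$, $\nu:C\to Z$ be non-expanding maps with $\mu|_B=\nu|_B$. Because $P=A\cup C$, the only candidate mediating map is $u=\mu\cup\nu$ (well defined by $\mu|_B=\nu|_B$), which gives uniqueness at once; the content is that $u$ is non-expanding. On pairs lying in $A$ this holds because $\mu$ is non-expanding, on pairs lying in $C$ because $\nu$ is, and for $a\in A\setminus B$, $c\in C\setminus B$ I pick a $b\in B$ realising the minimum defining $d_P(a,c)$ and estimate
$$
d_Z(u(a),u(c))=d_Z(\mu(a),\nu(c))\le d_Z(\mu(a),\mu(b))+d_Z(\nu(b),\nu(c))\le d_A(a,b)+d_C(b,c)=d_P(a,c),
$$
using $\mu(b)=\nu(b)$. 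Hence $P=A\ast_BC$ in $\ol{\mathcal{M}_\Sigma}$, and as $P\in\mathcal{M}_\Sigma$ this proves the strict AP for $\mathcal{M}_\Sigma$.
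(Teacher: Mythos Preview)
Your argument is correct and is essentially the same as the paper's: you form the classical metric amalgam on $A\cup C$ with the shortest-path cross distance through the finite common part, note the result lies in $\mathcal{M}_\Sigma$, and verify the pushout property by showing that $\mu\cup\nu$ is non-expanding via a witnessing midpoint. The only differences are cosmetic (notation, and that you spell out the metric axioms a bit more explicitly than the paper does).
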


\begin{proof}
Let $(X,Y,Z,\mathbf{1}_Y,\mathbf{1}_Y)$ be an amalgam of finite metric spaces with distances from $\Sigma$, so that
$Y=X\cap Z$. We define a metric space $M$ on $X\cup Z$ with distances from $\Sigma$ and prove that $M$ has the
properties required by the definition of the free amalgamated sum $X\ast_YZ$ with respect to $\ol{\mathcal{M}_\Sigma}$.
If either $u,v\in X$, or $u,v\in Z$, then the distance $d_M(u,v)$ coincides with the distance in the corresponding
metric space. If, however, $x\in X$ and $z\in Z$, then we define
$$d_M(x,z)=\min\{d_X(x,y)+d_Z(y,z):\ y\in Y\}.$$
Notice that the indicated minimum exists because $Y$ is finite. A straightforward verification shows that all triangle
inequalities are satisfied, so that we indeed obtain a metric space. Moreover, $M\in\mathcal{M}_\Sigma$.

Assume now that for a (countable) metric space $M'$ (with distances from $\Sigma$) we have given metric space
homomorphisms $f:X\to M'$ and $g:Z\to M'$ such that $f$ and $g$ agree on $Y$. Then the binary relation $h=f\cup g$ is a
function, and we know that it is non-expanding on pairs of points that both belong either to $X$, or to $Z$. Now let
$x\in X\setminus Z$ and $z\in Z\setminus X$. Then $d_M(x,z)=d_X(x,y_0)+d_Z(y_0,z)$ for a particular $y_0\in Y$, so
\begin{align*}
d_{M'}(h(x),h(z))&=d_{M'}(f(x),g(z))\\
&\leq d_{M'}(f(x),f(y_0))+d_{M'}(g(y_0),g(z))\\
&\leq d_X(x,y_0)+d_Z(y_0,z)=d_M(x,z),
\end{align*}
bearing in mind that $f(y_0)=g(y_0)$ and that $f$ and $g$ are homomorphisms (i.e.\ non-expanding functions). Therefore,
$h:M\to M'$ is a homomorphism of metric spaces, and we are done, since $h$ is obviously the unique function extending
both $f$ and $g$ to $M$.
\end{proof}

We can now turn to the main results of this section.

\begin{corollary}\label{rat-ury}
$\Self(\aleph_0)$ (and so any countable semigroup) embeds into $\End(\mathbb{U}_\mathbb{Q})$.
\end{corollary}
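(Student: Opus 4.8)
The plan is to apply Corollary~\ref{ug} to the class $\C=\mathcal{M}_{\mathbb{Q}_0^+}$ of all finite metric spaces with rational distances, for a suitably chosen root $A\in\ol\C$. We have already verified every hypothesis of that corollary for this class: Lemma~\ref{ms-fs} gives the strict AP, Lemma~\ref{ms-fun} gives the strict 1PHEP (with $\Sigma=\mathbb{Q}_0^+$), and the uniquely generated one-point extension condition holds automatically since the signature is purely relational (each one-point extension $C$ of $B$ has a single element in $C\setminus B$, which is its unique generator). Moreover $\mathbb{U}_\mathbb{Q}=\Fl(\C)$ by definition. So Corollary~\ref{ug} tells us that for \emph{any} $A\in\ol\C$ there is an embedding $\End(A)\hookrightarrow\End(\mathbb{U}_\mathbb{Q})$.

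The remaining task is to produce a countably infinite metric space $A$ with rational distances for which $\End(A)\cong\Self(\aleph_0)$, exactly as was done for graphs (Corollary~\ref{dm}, take the edgeless graph) and posets (Corollary~\ref{au}, take the antichain). The natural candidate is the countably infinite \emph{equidistant} space: let $A$ have underlying set $\N$ and put $d_A(m,n)=1$ whenever $m\neq n$ (and $0$ otherwise). This is trivially a metric space with rational distances, and every finite subspace is again equidistant with distance $1$, hence lies in $\C$, so $A\in\ol\C$. The key point is that a map $f:A\to A$ is a homomorphism (non-expanding) if and only if $d_A(f(m),f(n))\leq d_A(m,n)$ for all $m,n$; since all nonzero distances equal $1$, this inequality is vacuous (the right-hand side is either $0$, forcing $f(m)=f(n)$, which holds since $m=n$, or $1$, which bounds everything). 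Thus \emph{every} self-map of $\N$ is an endomorphism of $A$, i.e.\ $\End(A)=\Self(\N)\cong\Self(\aleph_0)$.

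Combining the two observations, $\Self(\aleph_0)\cong\End(A)\hookrightarrow\End(\mathbb{U}_\mathbb{Q})$, and since every countable semigroup embeds into $\Self(\aleph_0)$ by the Cayley representation theorem for semigroups, every countable semigroup embeds into $\End(\mathbb{U}_\mathbb{Q})$, as claimed.

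I expect no real obstacle here: the entire technical weight has been front-loaded into Lemmata~\ref{ms-fun} and~\ref{ms-fs} and into Corollary~\ref{ug}. The only thing requiring a moment's care is the verification that the equidistant space is the ``correct'' root, i.e.\ that its endomorphism monoid is genuinely all of $\Self(\aleph_0)$ and not merely a submonoid; but as noted this is immediate from the fact that the non-expanding condition imposes nothing when every positive distance is the same. One could equally well use any countable metric space all of whose positive distances coincide (rescaled to be rational), but the equidistant space with distance $1$ is the cleanest choice.
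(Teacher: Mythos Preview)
Your proof is correct and follows essentially the same approach as the paper: apply Corollary~\ref{ug} to $\mathcal{M}_{\mathbb{Q}_0^+}$ with $A$ the countably infinite equidistant space of distance~$1$ (the paper calls it the unit $\aleph_0$-simplex), noting that every self-map of $A$ is non-expanding. The only additional remark the paper makes is to observe explicitly, via Lemmata~\ref{fsum} and~\ref{ms-fs}, that the free $\C$-sum of a rooted multi-amalgam with rational distances again has rational distances; this is implicit in your setup since you work throughout in $\ol{\mathcal{M}_{\mathbb{Q}_0^+}}$.
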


\begin{proof}
First of all, note that by Lemmata \ref{fsum} and \ref{ms-fs} if all the distances in $X$ and $Z_i$, $i\in I$, are
rational, so are the distances in $\coprod\st(X,(Y_i,Z_i)_{i\in I})$. Now apply Corollary \ref{ug} while selecting $A$
to be the countably infinite metric space in which the distance between any two different points is $1$ (the unit
$\aleph_0$-simplex) and noticing that any self-map of $A$ is a metric space endomorphism of $A$.
\end{proof}

\begin{proposition}
$\End(\mathbb{U}_\mathbb{Q})$ embeds into $\End(\mathbb{U})$, so the assertion of Corollary \ref{rat-ury} holds for
$\mathbb{U}$ as well.
\end{proposition}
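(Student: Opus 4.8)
The plan is to construct an explicit embedding of $\End(\mathbb{U}_\mathbb{Q})$ into $\End(\mathbb{U})$ by extending each endomorphism of the rational Urysohn space to the completion, using the standard fact that a $1$-Lipschitz map between metric spaces extends uniquely to a $1$-Lipschitz map between their completions. First I would fix an isometric copy of $\mathbb{U}_\mathbb{Q}$ as a dense subspace of $\mathbb{U}$ (legitimate since $\ol{\mathbb{U}_\mathbb{Q}}$ is isometric to $\mathbb{U}$, as noted in the excerpt). Then, given $\varphi\in\End(\mathbb{U}_\mathbb{Q})$, i.e.\ a non-expanding self-map of $\mathbb{U}_\mathbb{Q}$, I would observe that $\varphi$ is uniformly continuous (being $1$-Lipschitz) and therefore, by the universal property of completion, extends uniquely to a map $\wh\varphi:\mathbb{U}\to\mathbb{U}$ which is again $1$-Lipschitz; concretely $\wh\varphi(\xi)=\lim_n\varphi(q_n)$ for any sequence $q_n$ in $\mathbb{U}_\mathbb{Q}$ converging to $\xi$, and this limit exists and is independent of the chosen sequence precisely because $\varphi$ is non-expanding and $\mathbb{U}$ is complete. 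Thus $\wh\varphi\in\End(\mathbb{U})$.

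Next I would check that $\varphi\mapsto\wh\varphi$ is a monoid homomorphism and is injective. Injectivity is immediate since $\wh\varphi$ restricts to $\varphi$ on the dense set $\mathbb{U}_\mathbb{Q}$, so $\wh\varphi=\wh\psi$ forces $\varphi=\psi$. It clearly sends $\mathbf{1}_{\mathbb{U}_\mathbb{Q}}$ to $\mathbf{1}_{\mathbb{U}}$. For multiplicativity, note that $\wh{\psi}\,\wh\varphi$ is a continuous self-map of $\mathbb{U}$ whose restriction to $\mathbb{U}_\mathbb{Q}$ agrees with $\psi\varphi$ (here one uses that $\wh\varphi$ maps $\mathbb{U}_\mathbb{Q}$ into $\mathbb{U}_\mathbb{Q}$, since it restricts to $\varphi$ there); by density and continuity, any two continuous maps on $\mathbb{U}$ agreeing on the dense subset $\mathbb{U}_\mathbb{Q}$ coincide, and since $\wh{\psi\varphi}$ is by definition the unique continuous extension of $\psi\varphi$, we get $\wh{\psi\varphi}=\wh\psi\,\wh\varphi$. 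This establishes that $\End(\mathbb{U}_\mathbb{Q})$ embeds into $\End(\mathbb{U})$.

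Finally, combining this embedding with Corollary \ref{rat-ury}, which gives an embedding of $\Self(\aleph_0)$ into $\End(\mathbb{U}_\mathbb{Q})$, yields an embedding of $\Self(\aleph_0)$ (and hence of every countable semigroup, by the Cayley theorem for semigroups) into $\End(\mathbb{U})$, which is the second assertion.

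I do not expect any serious obstacle here: the only point requiring a little care is the verification that the completion-extension of a $1$-Lipschitz map is well-defined and again $1$-Lipschitz, which is entirely standard metric-space functoriality, and the observation that $\wh\varphi$ preserves the dense subspace $\mathbb{U}_\mathbb{Q}$, which is needed for multiplicativity and follows at once from $\wh\varphi|_{\mathbb{U}_\mathbb{Q}}=\varphi$. One should also remark that $\mathbb{U}$ here is regarded in its metric-space (equivalently, relational $R_\alpha$ with $\alpha$ ranging over all positive reals) signature, so that ``endomorphism'' means ``non-expanding self-map'', consistently with the convention fixed earlier in the section.
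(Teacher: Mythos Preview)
Your proposal is correct and follows essentially the same approach as the paper: extend each endomorphism of $\mathbb{U}_\mathbb{Q}$ to its completion $\mathbb{U}$ via the unique uniformly continuous extension of a $1$-Lipschitz map, verify that the extension is again non-expanding, and obtain multiplicativity from the uniqueness of continuous extensions from a dense subset. You are slightly more explicit than the paper about injectivity, identity preservation, and the point that $\wh\varphi$ carries $\mathbb{U}_\mathbb{Q}$ into itself, but these are exactly the ingredients implicit in the paper's argument.
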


\begin{proof}
Any homomorphism of metric spaces is a uniformly continuous mapping---recall that we are concerned with particular
Lipschitz functions. Therefore, any homomorphism $f:X\to Y$ of metric spaces, where $Y$ is complete, extends uniquely
to the completion $\ol{X}$ of $X$ yielding a (uniformly continuous) function $\ol{f}:\ol{X}\to Y$. Thus if
$x,y\in\ol{X}$ and if $\{x_n\}_{n<\omega}$ and $\{y_n\}_{n<\omega}$ are any Cauchy sequences converging to $x$ and $y$,
respectively, then
\begin{align*}
d(\ol{f}(x),\ol{f}(y)) &= d(\ol{f}(\lim_{n\to\infty}x_n),\ol{f}(\lim_{n\to\infty}y_n))\\
&= d(\lim_{n\to\infty}f(x_n),\lim_{n\to\infty}f(y_n))\\
&= \lim_{n\to\infty}d(f(x_n),f(y_n))\\
&\leq \lim_{n\to\infty}d(x_n,y_n)=d(x,y),
\end{align*}
implying that $\ol{f}$ is a homomorphism.

So, for any $f,g\in\End(\mathbb{U}_\mathbb{Q})$ we have $\ol{f},\ol{g}\in\End(\mathbb{U})$, and both $\ol{gf}$ and
$\ol{g}\ol{f}$ are endomorphisms (thus uniformly continuous functions) of $\mathbb{U}$ extending $gf$. Since
$\mathbb{U}=\ol{\mathbb{U}_\mathbb{Q}}$, we must have $\ol{gf}=\ol{g}\ol{f}$, and the proposition follows.
\end{proof}

\begin{remark}
Since metric space endomorphisms are just a particular type of Lipschitz functions, the monoid of all Lipschitz
functions of both $\mathbb{U}_\mathbb{Q}$ and $\mathbb{U}$ also embeds $\Self(\aleph_0)$.
\end{remark}

\begin{problem}
Is every topological semigroup with a countable base isomorphic to a topological subsemigroup of
$\,\End(\mathbb{U})$\,?
\end{problem}

\subsection{The case of the countable universal ultrahomogeneous semilattice}

In previous applications of our general results we confined ourselves solely to relational structures; here we consider
an example in algebraic ones. However, there is a sense in which semilattices are a unique type of algebra with respect
to the method used in this paper (in particular, in Corollary \ref{ug}), making them similar to relational structures.

\begin{lemma}\label{sl-ug}
Any semilattice $(S,\wedge)$ which is a one-point extension of its subsemilattice $T$ is uniquely generated over $T$.
\end{lemma}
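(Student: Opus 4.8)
The plan is to argue directly from the definition of a semilattice one-point extension. Suppose $S = (S, \wedge)$ is a one-point extension of its subsemilattice $T$, and let $x, x' \in S \setminus T$ be two elements each of which generates $S$ over $T$; that is, $S = \langle T \cup \{x\}\rangle = \langle T \cup \{x'\}\rangle$. I want to conclude $x = x'$. The key observation is that in a semilattice, the subsemilattice generated by $T \cup \{x\}$ is very small: since $\wedge$ is idempotent, commutative and associative, every element of $\langle T \cup \{x\}\rangle$ is a finite meet of elements from $T \cup \{x\}$, hence is either an element of $T$, or is of the form $x \wedge t$ for some $t \in T$, or is $x$ itself (the last case being $x \wedge t$ with $t$ ranging over the empty meet, or simply when the $T$-part is absent). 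So $\langle T \cup \{x\}\rangle = T \cup \{x\} \cup \{x \wedge t : t \in T\}$.

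Next I would use this description to pin down $x'$. Since $x' \in S = \langle T \cup \{x\}\rangle$ and $x' \notin T$, the element $x'$ must be of the form $x \wedge t_0$ for some $t_0 \in T$ (allowing $t_0$ to be "absent", i.e. $x' = x$, as a degenerate sub-case which already gives the conclusion). Symmetrically, $x = x' \wedge s_0$ for some $s_0 \in T$. Combining, $x = x' \wedge s_0 = (x \wedge t_0) \wedge s_0 = x \wedge (t_0 \wedge s_0) \leq x$, and in particular $x \leq t_0 \wedge s_0 \leq t_0$ in the semilattice order, so $x \wedge t_0 = x$. But $x' = x \wedge t_0 = x$, as desired. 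The only thing to be slightly careful about is the bookkeeping of the degenerate cases (when the generated element involves no element of $T$, or when $x' = x$ outright), but these are handled uniformly by interpreting an empty meet appropriately or by treating them as trivial instances separately.

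I expect the main (very mild) obstacle to be purely expository: making the claim "every element of $\langle T \cup \{x\}\rangle \setminus T$ has the form $x \wedge t$ for some $t \in T$, or equals $x$" precise and rigorous, which rests on the normal form for semilattice words (idempotency collapses repeated generators, commutativity and associativity let us reorder and regroup freely). Once that normal form is in hand, the argument is a two-line computation with the semilattice order, so there is no real difficulty. This lemma then feeds into verifying condition (iii) of Theorem~\ref{main} for the class of finite semilattices via the uniquely-generated route of Corollary~\ref{ug}.
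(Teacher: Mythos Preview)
Your proof is correct and follows essentially the same route as the paper: from the semilattice normal form you extract $x' = x \wedge t_0$ and $x = x' \wedge s_0$, then use idempotency/associativity (phrased by you via the order $x \leq t_0$) to conclude $x' = x \wedge t_0 = x$. The paper's version is just the slightly terser algebraic rendering of the same two-line computation, omitting the explicit discussion of the degenerate case $x' = x$.
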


\begin{proof}
Assume $S=\langle T\cup\{x_1\}\rangle=\langle T\cup\{x_2\}\rangle$ for some $x_1,x_2\in S\setminus T$. Bearing in mind
that $\wedge$ is a commutative and idempotent operation, there exist $t,t'\in T$ such that $x_2=x_1\wedge t$ and
$x_1=x_2\wedge t'$. Hence, $x_1\wedge t'=x_1$ and thus
$$x_1 = x_1\wedge t\wedge t' = x_1\wedge t = x_2,$$
as wanted.
\end{proof}

This behavior is actually quite atypical for algebraic structures: in general, a one-point extension of an algebra can
have many relative generators. Basically, the whole of Galois theory is built around the simple idea of subfield-fixing
automorphisms of a given field. Similarly, any $n\times n$ regular diagonal matrix having $1$ as all but one of its
diagonal entries gives rise to a nonidentical automorphism of an $n$-dimensional vector space fixing pointwise one of
its $(n-1)$-dimensional subspaces.

Recall (e.g. from \cite{Gr}) that an algebra $A$ has the \emph{congruence extension property (CEP)} if for any
subalgebra $B$ of $A$ and any congruence $\rho$ of $B$ there exists a congruence $\theta$ of $A$, whose restriction to
$B$ is precisely $\rho$, that is, $\theta\cap(B\times B)=\rho$. Semilattices represent a classical example of algebras
with the CEP, so the following lemma applies in particular to them.

\begin{lemma}\label{sl-cep}
Let $A$ be an algebra with the CEP, and let $B$ be a subalgebra of $A$. Then for every congruence $\rho$ of $B$ the
congruence $\theta$ of $A$ generated by $\rho$ (i.e.\ the smallest $\theta\in\Con A$ containing $\rho$) has the
property that $\theta\cap(B\times B)=\rho$.
\end{lemma}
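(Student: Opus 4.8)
The plan is to show that the congruence $\theta$ on $A$ that we get from the CEP applied to $(B,\rho)$ is actually \emph{generated} by $\rho$, so that the CEP already produces the smallest such congruence. First I would invoke the CEP directly: since $A$ has the CEP and $B$ is a subalgebra of $A$, there exists some $\sigma\in\Con A$ with $\sigma\cap(B\times B)=\rho$. Now let $\theta$ be the congruence of $A$ generated by $\rho$, i.e.\ the smallest congruence of $A$ containing $\rho$ (as a subset of $A\times A$). Since $\sigma$ is a congruence of $A$ containing $\rho$, minimality gives $\theta\subseteq\sigma$. Consequently $\theta\cap(B\times B)\subseteq\sigma\cap(B\times B)=\rho$.

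For the reverse inclusion, I would note that $\rho\subseteq\theta$ by construction, and $\rho\subseteq B\times B$, so $\rho\subseteq\theta\cap(B\times B)$. Combining the two inclusions yields $\theta\cap(B\times B)=\rho$, which is exactly the assertion. The key observation making this work is that the CEP does not merely assert the existence of \emph{some} extending congruence, but — once we have one such $\sigma$ — the \emph{generated} congruence $\theta$ is squeezed between $\rho$ and $\sigma$, and both of these meet $B\times B$ in $\rho$.

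I expect no real obstacle here; the only point requiring a line of care is that $\theta\cap(B\times B)$ is a congruence \emph{of $B$} (it is the restriction of a congruence of $A$ to the subalgebra $B$, hence compatible with all operations of $B$), so the statement ``$\theta\cap(B\times B)=\rho$'' is a genuine identity between congruences of $B$ and not just between binary relations. This is automatic and worth only a parenthetical remark. Everything else is pure lattice-of-congruences bookkeeping: $\rho\subseteq\theta\subseteq\sigma$ together with $\sigma\cap(B\times B)=\rho$ forces equality throughout after intersecting with $B\times B$.
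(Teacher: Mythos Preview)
Your proof is correct and follows essentially the same argument as the paper: invoke the CEP to obtain some extending congruence (your $\sigma$, the paper's $\theta'$), use minimality of $\theta$ to get $\theta\subseteq\sigma$, and then sandwich $\rho\subseteq\theta\cap(B\times B)\subseteq\sigma\cap(B\times B)=\rho$. The only difference is cosmetic (variable names and your parenthetical remark that $\theta\cap(B\times B)$ is a congruence of $B$, which the paper omits as automatic).
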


\begin{proof}
Since $A$ has the CEP, there exists a congruence $\theta'$ of $A$ such that $\theta'\cap(B\times B)=\rho$; but then
$\rho\subseteq\theta'$, implying $\theta\subseteq\theta'$. Therefore,
$$\rho\subseteq\theta\cap(B\times B)\subseteq\theta'\cap(B\times B)=\rho,$$
thus $\theta\cap(B\times B)=\rho$ holds as well.
\end{proof}

The next assertion is a stronger form of Lemma 3.6 from \cite{D-Berg}.

\begin{lemma}
Let $\C$ be a class of finitely generated algebras with the CEP closed under taking homomorphic images. Then $\C$ (and,
in particular, the class of all finite semilattices) has the strict 1PHEP.
\end{lemma}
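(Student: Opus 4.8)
The plan is to establish the strict 1PHEP directly from the definition, mimicking the structure of Lemma~\ref{sl-cep} but working with the pushout construction for algebras. Suppose we are given a span $C \xleftarrow{i} B \xrightarrow{f} B'$ where $B, B', C \in \C$, $C$ is a one-point extension of $B$, and (without loss of generality) $i$ is an inclusion. Since $B' \cong B/\ker f$ and $\C$ is closed under homomorphic images, we can actually think of $f$ as a quotient map $B \to B/\rho$ where $\rho = \ker f$. First I would recall the standard fact from universal algebra (as in Example~\ref{examp}(3), citing \cite{Gr}) that the pushout $P$ of this span in $\ol\C$ — equivalently, in the category of all algebras of the signature, provided $P$ turns out to lie in $\ol\C$ — is obtained as $C / \theta$, where $\theta$ is the congruence on $C$ generated by the set $\{(a, b) : a, b \in B,\ f(a) = f(b)\}$, i.e.\ the congruence on $C$ generated by $\rho$ (viewed as a subset of $C \times C$). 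The two structure maps are $f' : C \to C/\theta$ the quotient map, and $i' : B' \to C/\theta$ induced by the inclusion $B \hookrightarrow C$ followed by $C \to C/\theta$; the latter is well-defined precisely because $\rho \subseteq \theta$.

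The crux is then to verify the two conclusions demanded by the strict 1PHEP: that $i'$ is an embedding, and that $f'$ is surjective. Surjectivity of $f'$ is immediate since it is a quotient map. For $i'$ being an embedding, the point is to show $\theta \cap (B \times B) = \rho$. Here is exactly where Lemma~\ref{sl-cep} enters: $C \in \C$ has the CEP, $B$ is a subalgebra of $C$, $\rho$ is a congruence of $B$, and $\theta$ is by construction the congruence of $C$ generated by $\rho$; Lemma~\ref{sl-cep} gives $\theta \cap (B \times B) = \rho$ verbatim. This means the composite $B \hookrightarrow C \to C/\theta$ has kernel exactly $\rho$, so it factors as an \emph{embedding} $B/\rho = B' \hookrightarrow C/\theta$, which is precisely $i'$. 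It remains to note that $P = C/\theta$ lies in $\C$ (being a homomorphic image of $C \in \C$, using closure under homomorphic images) and that it is a one-point extension of $B'$ or equals $B'$: indeed $C = \langle B \cup \{x\} \rangle$ forces $C/\theta = \langle B' \cup \{f'(x)\} \rangle$, so it is generated over $i'(B')$ by the single element $f'(x)$.

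The step I expect to be the main obstacle is confirming that the pushout in $\ol\C$ (the category of countable structures and homomorphisms into which we must check the universal property) coincides with the congruence-quotient $C/\theta$ constructed inside the category of all algebras of the signature — in other words, that the universal property is genuinely tested against arbitrary members of $\ol\C$ and not merely against finitely generated ones. Since $C$ is finite (finitely generated) and the target $Q$ of any competing cocone $C \to Q \leftarrow B'$ can be taken to be the subalgebra of $Q$ generated by the images, which is finitely generated and hence in $\C \subseteq \ol\C$, the universal property over $\ol\C$ reduces to the universal property over $\C$, and the classical description of pushouts of algebras applies. I would spell this reduction out carefully, as it is the one place where the ``strictness'' (the pushout being computed in $\ol\C$) must be reconciled with the purely algebraic construction. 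Everything else — the compatibility checks, the fact that $\theta$ restricted to $B$ is $\rho$ — follows mechanically from Lemmata~\ref{sl-cep} and the hypotheses on $\C$.
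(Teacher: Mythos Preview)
Your proposal is correct and follows essentially the same construction as the paper: form $P = C/\theta$ with $\theta$ the congruence generated by $\ker f$, invoke Lemma~\ref{sl-cep} to see that $\theta \cap (B\times B) = \ker f$ so that $i'$ is an embedding, and note that $f' = \nu_\theta$ is surjective and $P \in \C$ by closure under homomorphic images. The only difference is packaging: the paper verifies the pushout universal property directly (showing $\theta \subseteq \ker h$ for any competing cocone $h:C\to Q$, $g:B'\to Q$), whereas you cite the classical description of algebraic pushouts and then worry about reconciling it with $\ol\C$. That worry is overblown: $C/\theta$ is the pushout in the category of \emph{all} algebras of the signature, and since $\ol\C$ is a full subcategory containing $C/\theta$, it is automatically the pushout in $\ol\C$ --- your reduction to finitely generated targets is unnecessary.
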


\begin{proof}
Let $A,B,B'\in\C$ be such that $f:B\to B'$ is a surjective homomorphism and $B$ is a subalgebra of $A$ (the restriction
that $A$ is a one-point extension of $B$ will turn out to be immaterial). Then $\ker f$ is a congruence of $B$. By the
CEP and the previous lemma, if $\theta$ is the congruence of $A$ generated by the relation $\ker f$, then
$\theta\cap(B\times B)=\ker f$. Therefore, $B'\cong B/\ker f$ embeds into $A/\theta$ by the (well-defined) mapping
$\iota:f(b)\mapsto b/\theta$, $b\in B$. On the other hand, the natural homomorphism $\nu_\theta:A\to A/\theta$ is
obviously surjective. Summing up, the following diagram commutes:
$$
\xymatrix{A \ar@{->>}[r]^{\nu_\theta} & A/\theta \\
B \ar@{->>}[r]^f \ar@{^{(}->}[u]^{\subseteq} & B' \ar@{^{(}->}[u]_{\iota} }
$$
By the given conditions, $A/\theta\in\C$. We claim that the above diagram is a pushout square in $\C$.

To this end, assume that $C\in\C$, while $g:B'\to C$ and $f:A\to C$ are two homomorphisms such that $gf=h|_B$:
$$
\xymatrix{& & C \\
A \ar@{->>}[r]^{\nu_\theta} \ar@/^2ex/[urr]^h & A/\theta \ar[ur]^u & \\
B \ar@{->>}[r]^f \ar@{^{(}->}[u]^{\subseteq} & B' \ar@{^{(}->}[u]_{\iota} \ar@/_2ex/[ruu]_g & }
$$
Clearly, a function $u:A/\theta\to C$ makes the above diagram commutative if and only if it satisfies the condition
$u(a/\theta)=h(a)$ for all $a\in A$. It is immediate that there is at most one such function; we argue that such
function exists, i.e.\ that the considered condition consistently defines a function $u$. For this it suffices to show
that $\theta\subseteq\ker h$. However, note that $\ker f\subseteq\ker gf$; thus if $\eta$ is the smallest congruence of
$A$ containing the relation $\ker gf$, then $\theta\subseteq\eta$. But $gf=h|_B$, which implies $\ker gf=\ker
h\cap(B\times B)$ and so $\ker gf\subseteq \ker h$. Since $\ker h$ is a congruence of $A$ it follows that
$\eta\subseteq\ker h$, yielding the desired conclusion $\theta\subseteq\ker h$. Finally, it is routinely verified that
the mapping $u$ is a homomorphism, which completes the proof.
\end{proof}

\begin{remark}
We have already recorded in item 3 of Example \ref{examp} that the \Fr class of finite semilattices has the strict AP.
It is possible, however, to be more specific than this. Namely, it is not difficult to prove the following handy
representation: we can take $\coprod\st(A,(B_i,C_i)_{i\in I})$ to consist of all finite subsets $X$ of
$A\cup\bigcup_{i<\omega}C_i$ such that $|X\cap A|\leq 1$ and $|X\cap C_i|\leq 1$ for all $i<\omega$. Furthermore,
$X\wedge Y$ is defined so that for $C\in\{A\}\cup\{C_i:\ i<\omega\}$ if any of $X\cap C$, $Y\cap C$ is empty, then
$(X\wedge Y)\cap C=(X\cup Y)\cap C$, while otherwise if $X\cap C=\{x\}$ and $Y\cap C=\{y\}$ then $(X\wedge Y)\cap
C=\{x\wedge y\}$.
\end{remark}

By the following main result of this section we supply an affirmative solution to Problem 4.1 from our recent paper
\cite{DM}.

\begin{corollary}
$\Self(\aleph_0)$ (and so any countable semigroup) embeds into $\End(\Omega)$, the endomorphism monoid of the countable
universal ultrahomogeneous semilattice.
\end{corollary}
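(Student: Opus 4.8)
The plan is to apply Corollary~\ref{ug} to the class $\C$ of all finite semilattices, with a carefully chosen root $A\in\ol\C$. To do this I first need to check that the three hypotheses of that corollary are in place for finite semilattices. The property of uniquely generated one-point extensions was established in Lemma~\ref{sl-ug}. The strict 1PHEP follows from the preceding lemma, since finite semilattices form a class of finitely generated algebras with the CEP that is closed under homomorphic images (every quotient of a semilattice is a semilattice). The strict AP was recorded in item~3 of Example~\ref{examp} (and made explicit in the Remark just above). Thus all hypotheses of Corollary~\ref{ug} hold, and we conclude that for every $A\in\ol\C$ there is an embedding $\End(A)\hookrightarrow\End(A^\star)\hookrightarrow\End(\Fl(\C))=\End(\Omega)$.

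The remaining task is to exhibit a countably infinite semilattice $A\in\ol\C$ with $\End(A)\cong\Self(\aleph_0)$; combined with the Cayley-type theorem that every countable semigroup embeds into $\Self(\aleph_0)$, this gives the corollary. The natural candidate is the semilattice obtained by freely adjoining a bottom element $0$ to a countably infinite antichain: take $A=\{0\}\cup\{a_n:n<\omega\}$ with $a_m\wedge a_n=0$ for $m\neq n$ and $0$ the least element. Every finitely generated (hence finite) subsemilattice of $A$ is again of this form, so $A\in\ol\C$. Any self-map $\sigma$ of the index set $\omega$ extends to the endomorphism $a_n\mapsto a_{\sigma(n)}$, $0\mapsto 0$ of $A$, and one checks this respects $\wedge$ since distinct indices always meet to $0$; conversely any endomorphism of $A$ must send $0$ to $0$ (the least element is preserved) and permutes the $a_n$'s among themselves only up to the self-map it induces, so $\End(A)\cong\Self(\aleph_0)$ as monoids.

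There is essentially no serious obstacle here: the corollary does all the heavy lifting, and the only points requiring care are the bookkeeping verifications that finite semilattices indeed satisfy each of the three structural conditions (all already supplied by the earlier lemmata) and that the chosen $A$ has endomorphism monoid exactly $\Self(\aleph_0)$. The one subtlety worth spelling out is why an arbitrary endomorphism $\varphi$ of $A$ cannot collapse two generators $a_m,a_n$ to the same element or move some $a_n$ to $0$ — but this is immediate once one notes that the induced map on indices need not be injective: collapsing or sending to $0$ is allowed, and the resulting map $\omega\to\omega\cup\{0\}$ is still an arbitrary self-map if we identify $0$ with itself, so in fact $\End(A)$ is precisely the monoid of all maps fixing $0$, which is isomorphic to $\Self(\aleph_0)$. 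Thus the proof reduces to invoking Corollary~\ref{ug} with this $A$, and noting that the resulting embedding $\Self(\aleph_0)\hookrightarrow\End(\Omega)$ settles Problem~4.1 of \cite{DM}.
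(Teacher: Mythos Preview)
Your verification of the three hypotheses of Corollary~\ref{ug} for finite semilattices is correct, and invoking that corollary is exactly the right strategy. The error lies in your choice of $A$ and the claim that $\End(A)\cong\Self(\aleph_0)$ for the antichain-with-bottom.

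Take $\sigma:\omega\to\omega$ with $\sigma(0)=\sigma(1)=0$ and consider your proposed extension $\varphi$ given by $a_n\mapsto a_{\sigma(n)}$, $0\mapsto 0$. Then
\[
\varphi(a_0\wedge a_1)=\varphi(0)=0,\qquad\text{but}\qquad \varphi(a_0)\wedge\varphi(a_1)=a_0\wedge a_0=a_0\neq 0,
\]
so $\varphi$ is \emph{not} a semilattice homomorphism. In general the map $a_n\mapsto a_{\sigma(n)}$, $0\mapsto 0$ is an endomorphism of your $A$ precisely when $\sigma$ is injective, so this construction only embeds the monoid of injective self-maps of $\omega$, not $\Self(\aleph_0)$. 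Your final paragraph asserting that ``collapsing \ldots\ is allowed'' is exactly where the argument breaks. (More generally, the endomorphisms of your $A$ fixing $0$ correspond to partial injections of $\omega$, and it is not at all clear that $\Self(\aleph_0)$ embeds into that monoid.)

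The paper avoids this by taking $A=F(\aleph_0)$, the free semilattice on a countably infinite generating set. The universal property of free algebras guarantees that \emph{every} self-map of the generators extends (uniquely) to an endomorphism, so one obtains an embedding $\Self(\aleph_0)\hookrightarrow\End(F(\aleph_0))$ directly, and then Corollary~\ref{ug} finishes the argument. Replacing your $A$ by $F(\aleph_0)$ fixes the proof.
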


\begin{proof}
In Corollary \ref{ug} (which applies because of Lemma \ref{sl-ug}), choose the initial semilattice $A$ to be
$F(\aleph_0)$, the free semilattice on a countably infinite set of generators. Namely, any self-map of the free
generating set of $F(\aleph_0)$ induces an endomorphism of $F(\aleph_0)$. Such endomorphisms form a submonoid of
$\End(F(\aleph_0))$ that is isomorphic to $\Self(\aleph_0)$.
\end{proof}

Previously it was only known that $\Self(\aleph_0)$ is a homomorphic image of a subsemigroup of $\End(\Omega)$
\cite[Corollary 4.4]{D-Berg} and that $\End(\Omega)$ embeds all \emph{finite} semigroups \cite[Corollary 4.4]{DM}. We
refer to \cite{AB,DM,DK,DKT} for further properties of $\Omega$, its automorphism group and its endomorphism monoid.

\begin{problem}
Is it possible to embed $\Self(\aleph_0)$ into $\End(\mathbb{D})$ and $\End(\mathbb{A})$, the endomorphism monoids of
the countable universal ultrahomogeneous distributive lattice \cite{DMac} and the countable atomless Boolean algebra,
respectively?
\end{problem}

The main obstacle here is the lack of the property of uniquely generated one-point extensions, i.e the existence of
nontrivial automorphisms of a one-point extension fixing pointwise the initial structure. Of course, other methods
might be developed for approaching this problem; just as a very simple example, let us mention that $\Self(\aleph_0)$
embeds into the endomorphism monoid of the \Fr limit of the class of all finite-dimensional vector spaces over a given
field $\mathbb{F}$. This is because the limit in question is the $\aleph_0$-dimensional vector space over $\mathbb{F}$,
which is at the same time a free algebra in the variety of vector spaces over $\mathbb{F}$, so any self-map of its
basis extends uniquely to a linear map of this space into itself.

\begin{ack}
The research of both authors is supported by Grant No.174019 of the Ministry of Education and Science of the Republic
of Serbia. We are grateful to the anonymous referee whose comments helped to improve the clarity of the presentation.
\end{ack}

% REFERENCES

\end{document}